\newtheorem{theorem}{Theorem}
\newtheorem{lemma}{Lemma}
\newcommand{\norm}[1]{\lVert#1\rVert}
\title{On asymptotic validity of naive inference with an approximate likelihood}
\author{Helen Ogden}
\date{University of Southampton, UK}
\begin{document}

\maketitle
\subsection*{Abstract}

Many statistical models have likelihoods which are intractable:
 it is impossible or too expensive to compute the likelihood 
exactly.
In such settings, a common approach is to replace
the likelihood with an approximation, 
and proceed with inference
as if the approximate likelihood were the exact likelihood.
In this paper, we describe conditions on the approximate likelihood
which guarantee that this naive inference with an approximate likelihood
has the same first-order asymptotic properties as
inference with the exact likelihood.
We investigate the implications of these results
for inference using a Laplace approximation to the likelihood
in a simple two-level latent variable model, and using
reduced dependence approximations to the likelihood in an
Ising model on a lattice.

\textbf{Keywords}: Intractable likelihood,
Ising model,
Laplace approximation,
Latent variable model

\section{Introduction}

For many models, it is impossible
or infeasibly expensive to evaluate the likelihood function,
typically because it involves a high-dimensional sum or integral.
In such cases, a common approach is to find an
approximation $\tilde L(.)$ to the likelihood $L(.)$, and to use 
$\tilde L(.)$
in place of $L(.)$ to conduct inference about the parameters 
of the model.

For instance, one could construct a point estimate of the parameters
by maximizing the approximate likelihood, and form
confidence intervals based on the curvature of the
approximated log-likelihood about its maximum.
From a Bayesian perspective,  an
approximate posterior
$\tilde \pi(\theta; y) \propto \tilde L(\theta; y) \pi(\theta)$
could be formed
by substituting the approximate likelihood in place of the exact
likelihood.

Such an approach is commonly used in practice.
In latent variable models, where the likelihood
is an integral over the latent variables, naive inference
using a Laplace approximation to the likelihood
is used in both maximum likelihood \citep{Pinheiro1995, Bates2015}
and Bayesian \citep{Rue2009} settings.
In Markov random field models,
where the likelihood involves
an intractable normalizing constant, inference
is often conducted
by substituting an approximation to this normalizing
constant in place of the exact normalizing constant
into the expression for the likelihood \citep{Friel2009, Tjelmeland2012}.

In this paper, we provide conditions under which
the naive approach is asymptotically justified. 
Under these conditions, the
approximate maximum likelihood estimator
is consistent and has the same asymptotic normal
distribution as the exact maximum likelihood estimator,
hypothesis tests based on the approximate likelihood
remain valid, and in Bayesian analysis the distance between
the approximate posterior and the true posterior
shrinks to zero.

\cite{Douc2004} show that the
approximate maximum likelihood estimator
will have the correct
asymptotic normal distribution provided that
the error in the log-likelihood,
\[\epsilon_n(\theta) = \log \tilde L_n(\theta) - \log L_n(\theta),\]
tends in probability to zero as $n \rightarrow \infty$,
uniformly in $\theta$.
We argue that this measure is too strict in many practical
examples,
 in which $\epsilon_n(\theta)$
grows rapidly with $n$ and yet the inference remains
asymptotically valid.
Our conditions are based instead 
on $\nabla_\theta \epsilon_n(\theta)$,
the error in the approximation to the score function.

We provide two examples to demonstrate
how the conditions may be used in practice.
The first is a simple two-level latent variable
model, with $m_n$ repeated observations for
each of $n$ items. We
deduce the rate at which $m_n$
must grow with $n$ in order for the Laplace approximation
to give asymptotically valid inference. If $m_n$
grows with $n$ at a slower rate, the estimator remains
consistent, but loses efficiency relative to the exact maximum likelihood estimator,
and naively constructed confidence intervals have lower than
nominal coverage.

The second example is
an Ising model on an $m \times m$ lattice, with
the class of reduced dependence approximations
\citep{Friel2009} used to approximate the likelihood.
For parameter values associated
with weak dependence, we show that
the reduced dependence approximation may be used to
obtain asymptotically valid inference at cost polynomial
in $m$, in contrast with the exponential cost of
computing the likelihood exactly.

\section{Asymptotic validity of approximate likelihood
  inference}
\label{sec:results_likelihood}

\subsection{Setup and notation}

Consider a sequence of models indexed by $n$,
with common parameter $\theta \in \Theta \subseteq{\mathbb{R}^p}$. Write
$\ell_n(\theta; y)$ for the log-likelihood given observed data
$y$ under model $n$,
$u_n(\theta; y) = \nabla_\theta \ell_n(\theta; y)$ for the corresponding
score function and
$J_n(\theta; y) = -\nabla_\theta^T \nabla_\theta \ell_n(\theta; y)$
for the observed information. We sometimes drop the data
$y$ from the notation for convenience.
Suppose that the data were generated from the
model for some $\theta_0 \in \Theta$, and that
as $n \rightarrow \infty$,
the amount of information provided by the data about the parameter
grows at some rate $r_n$, such that 
$J_n(\theta_0) = O(r_n)$ in probability.
This includes the case with $n$
independent replications as a special case, with $r_n = n$,
but we also wish to allow for more complex settings.
Write $I(\theta)$ for the
Fisher information matrix, chosen such that
$\bar J_n(\theta) = r_n^{-1} J_n(\theta) \rightarrow I(\theta)$
in probability.

Let $\tilde \ell_n(.; y)$ be an approximate log-likelihood,
which in general may be any
function of the parameters $\theta$, which
will be used in place of $\ell_n(.; y) \null$. 
Write $\hat \theta_n$ and $\tilde \theta_n$ for the estimators
maximizing $\ell_n(\theta)$ and $\tilde \ell_n(\theta)$
respectively.
Suppose that $\ell_n(\theta)$
and $\tilde \ell_n(\theta)$ are both three times
differentiable, and write
$\tilde u_n(\theta) = \nabla_\theta \tilde \ell_n(\theta)$
and
$\tilde J_n(\theta) = -\nabla_\theta^T \nabla_\theta \tilde \ell_n(\theta)$
for the approximate score and information.

Write 
\[\epsilon_n(\theta) = \tilde \ell_n(\theta) - \ell_n(\theta)\]
for the pointwise error in the log-likelihood,
\[\delta_n(\theta) = \norm{\nabla_\theta \epsilon_n(\theta)}
= \norm{\tilde u_n(\theta) - u_n(\theta)} \]
for the absolute error in the score,
and
\[\gamma_n(\theta) = \norm{\nabla_\theta^T \nabla_\theta \epsilon_n(\theta)} = \norm{J_n(\theta) - \tilde J_n(\theta)}\]
for the absolute error in the observed information matrix.
For concreteness, we use the $L_1$ norms
$\norm{a} = \sum_{i} |a_i|$ for a vector $a$, and
$\norm{A} = \max_{j} \{\sum_i |A_{ij}|\}$ for a matrix $A$,
although the same results hold for any choice of norms.

Write
$\delta_n^\infty (S) = \sup_{\theta \in S} \delta_n(\theta)$
for the uniform error in the score over any set $S \subseteq \Theta$,
and let $\delta_n^\infty = \delta_n^\infty(\Theta)$.
Similarly, define $\gamma_n^\infty(S) = \sup_{\theta \in S} \gamma_n(\theta)$ and $\gamma_n^\infty = \gamma_n^\infty(\Theta)$.
For any $\theta_0 \in \Theta$, write
$B_t(\theta_0) = \{\theta \in \Theta: \norm{\theta - \theta_0} \leq t\}$
for the ball of radius $t$ about $\theta_0$.

\subsection{Approximate maximum likelihood inference}
First, we describe sufficient conditions to ensure that $\tilde \theta_n$
is consistent. The proofs of all results
are given in the appendix.

We will assume
some standard regularity conditions on the model.
Writing
$\bar u_n(\theta) = r_n^{-1} u_n(\theta)$, and
$\bar u(\theta)$ for the limit as $n \rightarrow \infty$,
we assume that
$\sup_{\theta \in \Theta} \norm{\bar u_n(\theta) - \bar u(\theta)} \rightarrow 0$
in probability.
We assume $\bar u(.)$ is such that, for any $\epsilon > 0$,
$\int_{\theta: d(\theta, \theta_0) \geq \epsilon} \norm{\bar u(\theta)} > \bar u(\theta_0) = 0$.
These conditions are stronger than necessary, and
we expect the same result to hold
in many other situations where the exact maximum likelihood
estimator is consistent.

\begin{theorem}
  \label{thm:consistency}
  Suppose $\delta_n^\infty = o_p(r_n)$ as $n \rightarrow \infty \null$.
  Then $\tilde \theta_n \rightarrow \theta_0$
  in probability, as $n \rightarrow \infty$.
\end{theorem}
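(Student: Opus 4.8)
The plan is to run the classical consistency argument for $Z$-estimators (estimating-equation estimators), but applied to the normalized \emph{approximate} score rather than the exact one, controlling the discrepancy between the two by the hypothesis $\delta_n^\infty = o_p(r_n)$. The identifiability (well-separation) assumption on $\bar u$ then does the real work of pinning the maximizer down to $\theta_0$.

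First I would rescale, setting $r_n^{-1}\tilde u_n(\theta)$ as the approximate analogue of $\bar u_n(\theta)$. The key preliminary step is to show that this normalized approximate score converges uniformly in probability to the same limit $\bar u$ as the exact normalized score. By the triangle inequality, $\sup_{\theta}\norm{r_n^{-1}\tilde u_n(\theta) - \bar u(\theta)} \le r_n^{-1}\sup_\theta \norm{\tilde u_n(\theta) - u_n(\theta)} + \sup_\theta\norm{\bar u_n(\theta) - \bar u(\theta)} = r_n^{-1}\delta_n^\infty + o_p(1)$, where the second term vanishes by the assumed uniform convergence of $\bar u_n$ to $\bar u$. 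Since $\delta_n^\infty = o_p(r_n)$ by hypothesis, the first term is $o_p(1)$, so $\sup_\theta \norm{r_n^{-1}\tilde u_n(\theta) - \bar u(\theta)} \to 0$ in probability.

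Next I would use that $\tilde\theta_n$ maximizes $\tilde\ell_n$ and hence, at an interior point, satisfies the stationarity condition $\tilde u_n(\tilde\theta_n) = 0$, equivalently $r_n^{-1}\tilde u_n(\tilde\theta_n) = 0$. This reduces the claim to the standard statement that a (near-)zero of a sequence of functions converging uniformly to $\bar u$, whose unique well-separated zero is $\theta_0$, must converge to $\theta_0$. Concretely, fix $\epsilon > 0$ and set $c_\epsilon = \inf_{d(\theta,\theta_0)\ge\epsilon}\norm{\bar u(\theta)}$, which is strictly positive by the identifiability assumption. On the event $\{d(\tilde\theta_n,\theta_0)\ge\epsilon\}$ we have $\norm{\bar u(\tilde\theta_n)} \ge c_\epsilon$; but since $r_n^{-1}\tilde u_n(\tilde\theta_n) = 0$, also $\norm{\bar u(\tilde\theta_n)} = \norm{\bar u(\tilde\theta_n) - r_n^{-1}\tilde u_n(\tilde\theta_n)} \le \sup_\theta\norm{\bar u(\theta) - r_n^{-1}\tilde u_n(\theta)}$, which is $o_p(1)$ by the previous step. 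Hence $P(d(\tilde\theta_n,\theta_0)\ge\epsilon) \le P\big(\sup_\theta\norm{\bar u(\theta) - r_n^{-1}\tilde u_n(\theta)} \ge c_\epsilon\big) \to 0$, giving consistency.

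The main obstacle is the transition from ``maximizer of $\tilde\ell_n$'' to ``zero of $\tilde u_n$'': the argument above needs $\tilde\theta_n$ to be an interior stationary point, so some care is required to exclude the maximizer escaping to the boundary or failing to be stationary. The cleanest route is to argue at the level of the objective, showing that uniform closeness of $r_n^{-1}\tilde u_n$ to $\bar u$ forces $\tilde\theta_n$ into any fixed neighbourhood of $\theta_0$ for all large $n$; this is precisely where the well-separation condition is used, and everything else is routine.
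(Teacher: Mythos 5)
Your argument is correct and is essentially the paper's proof: the paper applies Theorem 5.9 of van der Vaart (2000) to $\Psi_n(\theta) = r_n^{-1}\tilde u_n(\theta)$ and $\Psi = \bar u$, using exactly your triangle-inequality bound $\sup_\theta\norm{\Psi_n(\theta)-\bar u(\theta)} \le \sup_\theta\norm{\bar u_n(\theta)-\bar u(\theta)} + r_n^{-1}\delta_n^\infty = o_p(1)$ together with the well-separation condition; you have simply written out the proof of that cited theorem inline. Your closing concern about $\tilde\theta_n$ being an interior stationary point is reasonable but applies equally to the paper, which likewise assumes $\Psi_n(\tilde\theta_n)=0$ without further comment.
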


We now give conditions to ensure that $\tilde \theta_n$
retains the same limiting distribution as $\hat \theta_n$.
Since $\hat \theta_n - \theta_0$
is $O_p(r_n^{-1/2})$, this is equivalent to finding
conditions under which $\hat \theta_n - \tilde \theta_n$
is $o_p(r_n^{-1/2})$. The following lemma bounds the
distance between $\tilde \theta_n$ and $\hat \theta_n$
in terms of the error in the score function near $\theta_0$.
\begin{lemma}
\label{lemma:distance_estimators}
Suppose that $\delta_n^\infty = o_p(r_n)$,
and that there exists $t > 0$ such that
$\delta_n^\infty\{B_t(\theta_0)\} = o_p(a_n) \null$. Then
$\tilde \theta_n - \hat \theta_n = o_p(a_n r_n^{-1})$.
\end{lemma}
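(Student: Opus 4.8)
The plan is to exploit that both estimators are stationary points of their respective objectives, so $u_n(\hat\theta_n) = 0$ and $\tilde u_n(\tilde\theta_n) = 0$, and to convert the first-order error in the score near $\theta_0$ into an error in the \emph{location} of the maximizer via a Taylor expansion of $u_n$ that is controlled by the information matrix. The first step is to invoke consistency: since $\delta_n^\infty = o_p(r_n)$, Theorem~\ref{thm:consistency} gives $\tilde\theta_n \to \theta_0$ in probability, and the stated regularity conditions give the analogous statement $\hat\theta_n \to \theta_0$ for the exact maximizer. Hence for the fixed $t > 0$ in the hypothesis, with probability tending to one both $\hat\theta_n$ and $\tilde\theta_n$ lie in $B_t(\theta_0)$; I work on this event throughout.

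Next I would bound the exact score at $\tilde\theta_n$. Because $\tilde u_n(\tilde\theta_n) = 0$,
\[
u_n(\tilde\theta_n) = u_n(\tilde\theta_n) - \tilde u_n(\tilde\theta_n) = -\nabla_\theta \epsilon_n(\tilde\theta_n),
\]
so that $\norm{u_n(\tilde\theta_n)} = \delta_n(\tilde\theta_n) \leq \delta_n^\infty\{B_t(\theta_0)\} = o_p(a_n)$ on the event above. I then relate this to the displacement $\tilde\theta_n - \hat\theta_n$ through the integral form of Taylor's theorem applied to the vector-valued map $u_n$. Using $u_n(\hat\theta_n) = 0$ and $\nabla_\theta^T u_n = -J_n$,
\[
u_n(\tilde\theta_n) = u_n(\tilde\theta_n) - u_n(\hat\theta_n) = -r_n \bar J_n^\ast (\tilde\theta_n - \hat\theta_n),
\]
where $\bar J_n^\ast = \int_0^1 \bar J_n\{\hat\theta_n + s(\tilde\theta_n - \hat\theta_n)\}\, ds$ is an averaged rescaled information over the segment joining the two estimators; the assumed three-times differentiability of $\ell_n$ guarantees the integrand is well defined and continuous. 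Rearranging gives $\tilde\theta_n - \hat\theta_n = -r_n^{-1}(\bar J_n^\ast)^{-1} u_n(\tilde\theta_n)$, hence $\norm{\tilde\theta_n - \hat\theta_n} \leq r_n^{-1}\,\norm{(\bar J_n^\ast)^{-1}}\,\delta_n(\tilde\theta_n)$.

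The main obstacle is to show $(\bar J_n^\ast)^{-1} = O_p(1)$, that is, that the averaged information is eventually invertible with a bounded inverse. Since every point on the segment lies in $B_t(\theta_0)$ and both endpoints converge to $\theta_0$, I would appeal to the standard regularity controlling $\bar J_n$ near $\theta_0$ --- uniform convergence of $\bar J_n$ to the continuous limit $I$ on a neighbourhood of $\theta_0$, together with positive definiteness of $I(\theta_0)$ --- to conclude that $\bar J_n^\ast \to I(\theta_0)$ in probability and therefore $\norm{(\bar J_n^\ast)^{-1}} = O_p(1)$. Combining this with the local score bound $\delta_n(\tilde\theta_n) = o_p(a_n)$ yields $\tilde\theta_n - \hat\theta_n = o_p(a_n r_n^{-1})$, as claimed. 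I would finally remark that the global condition $\delta_n^\infty = o_p(r_n)$ enters only through consistency, so that both estimators are eventually trapped in $B_t(\theta_0)$, whereas it is the sharper local rate $\delta_n^\infty\{B_t(\theta_0)\} = o_p(a_n)$ that governs the final displacement --- exactly the separation of roles the lemma is designed to exhibit.
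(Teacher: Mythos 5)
Your proposal is correct and follows essentially the same route as the paper: both arguments use $\tilde u_n(\tilde\theta_n)=0$ together with a Taylor expansion of the exact score about $\hat\theta_n$ to obtain $\tilde\theta_n-\hat\theta_n=(\text{averaged information})^{-1}r_n^{-1}\nabla_\theta\epsilon_n(\tilde\theta_n)$, then invoke consistency so the information converges to $I(\theta_0)$ and bound $\delta_n(\tilde\theta_n)$ by $\delta_n^\infty\{B_t(\theta_0)\}$ on the high-probability event that $\tilde\theta_n\in B_t(\theta_0)$. Your use of the integral form of Taylor's theorem for the vector-valued score is a minor technical refinement over the paper's mean-value form, but the substance of the argument is identical.
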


Applying Lemma \ref{lemma:distance_estimators} 
 with $a_n = r_n^{1/2}$
 leads directly to the asymptotic normality result.

\begin{theorem}
  \label{thm:normality}
Suppose that $\delta_n^\infty = o_p(r_n)$, and that 
there exists $t > 0$ such that
$\delta_n^\infty\{B_t(\theta_0)\} = o_p(r_n^{1/2})$.
  Then
  \[r_n^{1/2} (\tilde \theta_n - \theta_0) \rightarrow N(0, I(\theta_0)^{-1})\]
 in distribution, as $n \rightarrow \infty$.
\end{theorem}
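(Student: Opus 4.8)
The plan is to reduce the statement to the classical asymptotic normality of the exact maximum likelihood estimator $\hat\theta_n$ together with Lemma \ref{lemma:distance_estimators}, and then combine the two pieces via Slutsky's theorem. The key observation is that the theorem's two hypotheses are exactly the hypotheses of Lemma \ref{lemma:distance_estimators} under the particular choice $a_n = r_n^{1/2}$, so almost all of the work has already been done.

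First I would record the standard fact that, under the stated regularity conditions, the exact estimator satisfies $r_n^{1/2}(\hat\theta_n - \theta_0) \rightarrow N(0, I(\theta_0)^{-1})$ in distribution. This follows from the usual Taylor expansion of the exact score: setting $u_n(\hat\theta_n) = 0$ and expanding about $\theta_0$ gives $r_n^{1/2}(\hat\theta_n - \theta_0) = \bar J_n(\theta_n^*)^{-1} r_n^{-1/2} u_n(\theta_0)$ for some $\theta_n^*$ on the segment between $\hat\theta_n$ and $\theta_0$, where $\bar J_n \rightarrow I(\theta_0)$ in probability by the assumed convergence of the normalized information and $r_n^{-1/2} u_n(\theta_0) \rightarrow N(0, I(\theta_0))$ by a central limit theorem for the score. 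In particular this gives $\hat\theta_n - \theta_0 = O_p(r_n^{-1/2})$, as already noted in the text.

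Next I would apply Lemma \ref{lemma:distance_estimators} with $a_n = r_n^{1/2}$. The lemma's two requirements, namely $\delta_n^\infty = o_p(r_n)$ and $\delta_n^\infty\{B_t(\theta_0)\} = o_p(a_n) = o_p(r_n^{1/2})$, coincide precisely with the two hypotheses assumed here. The lemma therefore yields $\tilde\theta_n - \hat\theta_n = o_p(a_n r_n^{-1}) = o_p(r_n^{-1/2})$, so that $r_n^{1/2}(\tilde\theta_n - \hat\theta_n) = o_p(1)$.

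Finally I would write the decomposition
\[
  r_n^{1/2}(\tilde\theta_n - \theta_0) = r_n^{1/2}(\hat\theta_n - \theta_0) + r_n^{1/2}(\tilde\theta_n - \hat\theta_n),
\]
in which the first term converges in distribution to $N(0, I(\theta_0)^{-1})$ by the first step and the second term is $o_p(1)$ by the second step; Slutsky's theorem then delivers the claimed limit. The substantive content all lives inside Lemma \ref{lemma:distance_estimators}, so the main obstacle is not in this argument but in establishing that lemma: given it, the theorem is essentially immediate, and the only point requiring care is verifying that the choice $a_n = r_n^{1/2}$ makes the lemma's conditions match the theorem's hypotheses and that $a_n r_n^{-1} = r_n^{-1/2}$, which is exactly the order needed to leave the limiting normal distribution unchanged.
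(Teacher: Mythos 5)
Your proposal is correct and follows essentially the same route as the paper: apply Lemma \ref{lemma:distance_estimators} with $a_n = r_n^{1/2}$ to get $\tilde\theta_n - \hat\theta_n = o_p(r_n^{-1/2})$, then add this to the standard normal limit of $r_n^{1/2}(\hat\theta_n - \theta_0)$ via Slutsky. The only difference is that you spell out the classical expansion for $\hat\theta_n$, which the paper takes as given.
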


It is also desirable for
hypothesis tests constructed by using the approximate likelihood
in place of the exact likelihood
to have the correct asymptotic distribution.

Consider testing the hypothesis $H_0: \theta \in \Theta_R$,
where $\Theta_R \subset \Theta$ and $\text{dim}(\Theta_R) = q$.
Write $\hat \theta_n^R$ for the restricted maximum likelihood estimator, and
$\Lambda_n = 2 \{\ell_n(\hat \theta_n) - \ell_n(\hat \theta_n^R) \}$
for the likelihood ratio statistic.
The approximate version of the likelihood ratio test statistic
is
$\tilde \Lambda_n = 2 \{\tilde \ell_n(\tilde \theta_n) - \tilde \ell_n(\tilde \theta_n^R)\}$,
where $\tilde \theta_n^R$ is the restricted
approximate likelihood estimator.

Under the same conditions that were used to show that
$\tilde \theta_n$ has the correct limiting distribution,
plus a bound on the error in the information around
$\theta_0$, $\tilde \Lambda_n$ is asymptotically
equivalent to $\Lambda_n$ under $H_0$.

\begin{theorem}
  \label{thm:hypothesis}
Suppose that $\delta_n^\infty = o_p(r_n)$,
 and that 
there exists $t > 0$ such that
$\delta_n^\infty\{B_t(\theta_0)\} = o_p(r_n^{1/2})$
and $\gamma_n^\infty\{B_t(\theta_0)\} = o_p(r_n) \null$.
Then, under $H_0$, $\tilde \Lambda_n - \Lambda_n = o_p(1)$.
\end{theorem}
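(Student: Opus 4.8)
The plan is to reduce everything to local quadratic behaviour around $\theta_0$, using that under $H_0$ we have $\theta_0 \in \Theta_R$, so that $\hat \theta_n$, $\hat \theta_n^R$, $\tilde \theta_n$ and $\tilde \theta_n^R$ are all consistent for $\theta_0$ (by Theorem \ref{thm:consistency} for the approximate estimators and standard theory for the exact ones). The starting point is the algebraic identity obtained by adding and subtracting $\tilde \ell_n(\hat \theta_n)$ and $\tilde \ell_n(\hat \theta_n^R)$, using $\tilde \ell_n(\hat \theta_n) - \ell_n(\hat \theta_n) = \epsilon_n(\hat \theta_n)$ and similarly at $\hat \theta_n^R$:
\[
\tilde \Lambda_n - \Lambda_n = 2\{\tilde \ell_n(\tilde \theta_n) - \tilde \ell_n(\hat \theta_n)\} - 2\{\tilde \ell_n(\tilde \theta_n^R) - \tilde \ell_n(\hat \theta_n^R)\} + 2\{\epsilon_n(\hat \theta_n) - \epsilon_n(\hat \theta_n^R)\}.
\]
I would then show each of the three bracketed terms is $o_p(1)$.

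For the first term, since $\tilde u_n(\tilde \theta_n) = 0$, a second-order Taylor expansion of $\tilde \ell_n$ about $\tilde \theta_n$ gives $\tilde \ell_n(\tilde \theta_n) - \tilde \ell_n(\hat \theta_n) = \tfrac12 (\hat \theta_n - \tilde \theta_n)^T \tilde J_n(\theta^*)(\hat \theta_n - \tilde \theta_n)$ for some $\theta^*$ on the segment joining the two estimators. Consistency places $\theta^*$ in $B_t(\theta_0)$ with probability tending to one, so $\norm{\tilde J_n(\theta^*)} \leq \norm{J_n(\theta^*)} + \gamma_n^\infty\{B_t(\theta_0)\} = O_p(r_n) + o_p(r_n) = O_p(r_n)$, where $\norm{J_n} = O_p(r_n)$ near $\theta_0$ follows from $\bar J_n(\theta) \to I(\theta)$. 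This is precisely where the information-error condition $\gamma_n^\infty\{B_t(\theta_0)\} = o_p(r_n)$ enters: it lets me replace the possibly badly behaved approximate information by the exact one. Combined with $\tilde \theta_n - \hat \theta_n = o_p(r_n^{-1/2})$, which is Lemma \ref{lemma:distance_estimators} with $a_n = r_n^{1/2}$, the term is $O_p(r_n) \cdot o_p(r_n^{-1}) = o_p(1)$.

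The second term is handled identically after reparametrizing $\Theta_R$ locally by $\phi \in \mathbb{R}^q$: the restricted problem is itself an instance of the general setup on a $q$-dimensional parameter space, the restricted score error is dominated by $\delta_n^\infty\{B_t(\theta_0)\}$ through the chain rule, so Lemma \ref{lemma:distance_estimators} applies and yields $\tilde \theta_n^R - \hat \theta_n^R = o_p(r_n^{-1/2})$, while the Hessian in the $\phi$-coordinates is again $O_p(r_n)$ by $\gamma_n^\infty\{B_t(\theta_0)\} = o_p(r_n)$, together with $\tilde u_n = O_p(r_n^{1/2})$ controlling the curvature contribution. The third term is the most direct: by the mean value theorem $\epsilon_n(\hat \theta_n) - \epsilon_n(\hat \theta_n^R) = \nabla_\theta \epsilon_n(\theta^{**})^T(\hat \theta_n - \hat \theta_n^R)$, bounded in absolute value by $\delta_n^\infty\{B_t(\theta_0)\}\,\norm{\hat \theta_n - \hat \theta_n^R} = o_p(r_n^{1/2}) \cdot O_p(r_n^{-1/2}) = o_p(1)$, using that both exact estimators are $r_n^{1/2}$-consistent under $H_0$. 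Note that $\epsilon_n$ itself may diverge; what saves us is that only its gradient over a shrinking neighbourhood of $\theta_0$ enters.

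The main obstacle I anticipate is the bookkeeping for the restricted second term when $\Theta_R$ is a curved submanifold: one must check that the constrained first-order condition makes the linear term in the Taylor expansion vanish up to an $o_p(1)$ curvature correction, and that Lemma \ref{lemma:distance_estimators} genuinely transfers to the reparametrized restricted model. Everything else is routine once the three error conditions are in hand, and the decomposition cleanly separates the contributions controlled by the score error $\delta_n^\infty\{B_t(\theta_0)\}$ (first and third terms) from those controlled by the information error $\gamma_n^\infty\{B_t(\theta_0)\}$ (first and second terms).
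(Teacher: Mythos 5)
Your proposal is correct and follows essentially the same route as the paper: the same decomposition via adding and subtracting $\tilde \ell_n(\hat \theta_n)$ and $\tilde \ell_n(\hat \theta_n^R)$, the same second-order Taylor expansion about the approximate maximizers with $\gamma_n^\infty\{B_t(\theta_0)\} = o_p(r_n)$ controlling $\tilde J_n$, and the same mean-value bound $\delta_n^\infty\{B_t(\theta_0)\}\cdot O_p(r_n^{-1/2}) = o_p(1)$ for the $\epsilon_n(\hat \theta_n) - \epsilon_n(\hat \theta_n^R)$ term. The only difference is that you spell out the restricted-estimator step that the paper dismisses with ``similarly''; your attention to the reparametrization of $\Theta_R$ there is a point of care the paper leaves implicit.
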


The Wald and score test statistics,
$W_n$ and $S_n$, are asymptotically
equivalent to the likelihood ratio test,
so under $H_0$, all three statistics have limiting
distribution $\chi^2_{p - q}$.
Under the conditions of Theorem \ref{thm:hypothesis},
 $I(\theta_0)$ is consistently estimated by
$r_n^{-1} \tilde J_n(\tilde \theta_n)$,
so the approximate  Wald and score
test statistics $\tilde W_n$ and $\tilde S_n$
are also asymptotically equivalent to $\Lambda_n$.

\subsection{Approximate Bayesian inference}

We now consider the approximate posterior
\[\tilde \pi(\theta |y) \propto \tilde L_n(\theta; y) \pi(\theta),\]
where we suppose that the prior is such that $\log \pi(.)$
 is three times differentiable.
Under the same conditions that were used to show asymptotic
correctness of maximum likelihood inference, the total
variation distance between the approximate and exact posteriors,
  \[d_{TV}\{\tilde \pi(\theta|y), \pi(\theta | y)\}
  = \frac{1}{2} \int_{\Theta} \big|\tilde \pi(\theta|y) - \pi(\theta|y) \big| d\theta,\]
  tends to zero as $n \rightarrow \infty$.

  \begin{theorem}
    \label{thm:posterior}
Suppose that $\delta_n^\infty = o_p(r_n)$,
 and that 
there exists $t > 0$ such that
$\delta_n^\infty\{B_t(\theta_0)\} = o_p(r_n^{1/2})$
and $\gamma_n^\infty\{B_t(\theta_0)\} = o_p(r_n) \null$.
  Then
  \[d_{TV}\{\tilde \pi(\theta|y), \pi(\theta | y)\} = o_p(1)\]
  as $n \rightarrow \infty$.
  \end{theorem}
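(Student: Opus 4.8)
The plan is to work directly with the ratio of the two posterior densities, using the fact that an additive constant in $\epsilon_n$ cancels there, so that only the variation of $\epsilon_n$---controlled by the gradient bound---matters. Since $\tilde L_n(\theta) = L_n(\theta)\exp\{\epsilon_n(\theta)\}$, setting $\eta_n(\theta) = \epsilon_n(\theta) - \epsilon_n(\theta_0)$ the factor $\exp\{\epsilon_n(\theta_0)\}$ cancels from both numerator and normaliser, giving
\[
\frac{\tilde\pi(\theta|y)}{\pi(\theta|y)} = \frac{e^{\eta_n(\theta)}}{E_{\pi(\cdot|y)}[e^{\eta_n}]}.
\]
Hence
\[
2\,d_{TV}\{\tilde\pi(\theta|y),\pi(\theta|y)\}
= E_{\pi(\cdot|y)}\!\left[\,\left|\frac{e^{\eta_n}}{E_{\pi(\cdot|y)}[e^{\eta_n}]} - 1\right|\,\right].
\]
Since $|E_{\pi(\cdot|y)}[e^{\eta_n}] - 1| \le E_{\pi(\cdot|y)}[|e^{\eta_n}-1|]$, a short triangle-inequality argument shows it is enough to prove the single bound $E_{\pi(\cdot|y)}[\,|e^{\eta_n}-1|\,] = o_p(1)$; the normaliser is then $1 + o_p(1)$ and the displayed integrand is $o_p(1)$ in $L_1$. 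I would establish this bound by splitting the expectation over $B_t(\theta_0)$ and its complement.

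On $B_t(\theta_0)$ the gradient condition does the work. The mean value theorem gives $|\eta_n(\theta)| \le \delta_n^\infty\{B_t(\theta_0)\}\,\norm{\theta-\theta_0}$, so in the rescaled variable $s = r_n^{1/2}(\theta-\theta_0)$ one has $|\eta_n(\theta)| \le c_n\norm{s}$ with $c_n = r_n^{-1/2}\,\delta_n^\infty\{B_t(\theta_0)\} = o_p(1)$ by hypothesis. Using $|e^x-1| \le |x|e^{|x|}$ together with the fact that, under the stated regularity conditions, the exact posterior concentrates at rate $r_n^{-1/2}$ with asymptotically Gaussian, exponentially light tails---so that $E_{\pi(\cdot|y)}[\,\norm{s}\,e^{c_n\norm{s}}\mathbf 1_{B_t(\theta_0)}\,] = O_p(1)$ for all small $c_n$---the contribution from $B_t(\theta_0)$ is at most $c_n\,O_p(1) = o_p(1)$.

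The complement $B_t(\theta_0)^c$ is where the global condition $\delta_n^\infty = o_p(r_n)$ enters, and I expect this to be the main obstacle, precisely because $\epsilon_n$ itself is uncontrolled: only its gradient is bounded, so $e^{\eta_n}$ may be exponentially large far from $\theta_0$. Here I would use $|\eta_n(\theta)| \le \delta_n^\infty\,\norm{\theta-\theta_0}$ to dominate $e^{\eta_n}$, while the separation condition on $\bar u(\cdot)$ forces the exact posterior mass to decay exponentially, $\pi\{B_t(\theta_0)^c \mid y\} \le e^{-\kappa r_n}$ for some $\kappa>0$ with probability tending to one. The growth rate $\delta_n^\infty = o_p(r_n)$ is then dominated by this exponential concentration, so that $E_{\pi(\cdot|y)}[\,e^{\eta_n}\mathbf 1_{B_t(\theta_0)^c}\,] \to 0$ and likewise $E_{\pi(\cdot|y)}[\,|e^{\eta_n}-1|\mathbf 1_{B_t(\theta_0)^c}\,] \to 0$; the delicate point, which I would handle through integrability of the prior combined with the likelihood decay, is to carry this through when $\Theta$ is unbounded and $\norm{\theta-\theta_0}$ is therefore unbounded on $B_t(\theta_0)^c$.

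Combining the two regions yields $E_{\pi(\cdot|y)}[\,|e^{\eta_n}-1|\,] = o_p(1)$, hence $E_{\pi(\cdot|y)}[e^{\eta_n}] = 1 + o_p(1)$, and the identity above gives the claim. It is worth noting that this route uses only the two score conditions; the information bound $\gamma_n^\infty\{B_t(\theta_0)\} = o_p(r_n)$ is the ingredient one would instead need in an alternative argument that first establishes a separate Bernstein--von Mises approximation for $\tilde\pi(\theta|y)$ and then compares the Gaussian limits centred at $\hat\theta_n$ and $\tilde\theta_n$, where matching the curvatures $\tilde J_n(\tilde\theta_n)$ and $J_n(\hat\theta_n)$---not merely the modes, which Lemma \ref{lemma:distance_estimators} already aligns---is exactly what $\gamma$ provides.
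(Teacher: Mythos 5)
Your ratio identity $\tilde\pi(\theta|y)/\pi(\theta|y) = e^{\eta_n(\theta)}/E_{\pi(\cdot|y)}[e^{\eta_n}]$ and the reduction to showing $E_{\pi(\cdot|y)}[|e^{\eta_n}-1|]=o_p(1)$ are correct, and this is genuinely a different route from the paper, which instead Laplace-approximates $Z_n$ and $\tilde Z_n$ separately (that is where the paper spends the hypothesis $\gamma_n^\infty\{B_t(\theta_0)\}=o_p(r_n)$, matching $|J_n^\pi(\hat\theta_n^\pi)|$ with $|\tilde J_n^\pi(\tilde\theta_n^\pi)|$) and then compares log-posteriors via its Lemma \ref{lemma:LR_diff}. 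However, there is a genuine gap on the inner region, and it comes from splitting at the \emph{fixed} ball $B_t(\theta_0)$. The bound $|\eta_n(\theta)|\le c_n\norm{s}$ with $c_n=r_n^{-1/2}\delta_n^\infty\{B_t(\theta_0)\}=o_p(1)$ is fine, but the step $E_{\pi(\cdot|y)}[\norm{s}e^{c_n\norm{s}}\mathbf 1_{B_t(\theta_0)}]=O_p(1)$ requires uniform exponential moments of the rescaled exact posterior, which follow neither from posterior consistency nor from a Bernstein--von Mises statement in total variation: near the edge of $B_t(\theta_0)$ one has $\norm{s}\asymp r_n^{1/2}$, so $e^{c_n\norm{s}}$ can be as large as $e^{t\,\delta_n^\infty\{B_t(\theta_0)\}}$, and $\delta_n^\infty\{B_t(\theta_0)\}$ is only assumed to be $o_p(r_n^{1/2})$ --- the posterior tail must beat a factor that may be sub-exponential in $r_n$, which is an additional assumption. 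The paper's device for avoiding exactly this is to split at a \emph{shrinking} ball $B_{b_n^{-1/2}}(\hat\theta_n^\pi)$ with $b_n\to\infty$, $b_n=o(r_n)$, chosen so that $b_n^{-1/2}\delta_n^\infty\{B_t(\theta_0)\}=o_p(1)$; then $\sup|\eta_n|$ over the ball is $o_p(1)$ outright and only a concentration-in-probability statement is needed, with no moment conditions. Adopting that radius closes your inner-region argument without changing its structure.

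The outer region is the second gap, and you have correctly diagnosed it yourself: pairing $|\eta_n|\le\delta_n^\infty\norm{\theta-\theta_0}$ against exponential decay of the exact posterior cannot be carried through from the stated hypotheses on an unbounded $\Theta$, since $\delta_n^\infty=o_p(r_n)$ controls a rate but not the product $\delta_n^\infty\norm{\theta-\theta_0}$, which is unbounded in $\theta$. The clean repair is the identity $E_{\pi(\cdot|y)}[e^{\eta_n}\mathbf 1_{A}] = E_{\pi(\cdot|y)}[e^{\eta_n}]\int_A\tilde\pi(\theta|y)\,d\theta$, valid for any measurable $A$: the outer contribution is proportional to the mass that the \emph{approximate} posterior assigns to the complement, and that mass is $o_p(1)$ because $\tilde\ell_n^\pi$ satisfies the same uniform-score consistency conditions when $\delta_n^\infty=o_p(r_n)$ (this is how the paper implicitly disposes of $\int_{S_n^C}\tilde\pi(\theta|y)\,d\theta$). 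Your closing observation that $\gamma_n^\infty\{B_t(\theta_0)\}=o_p(r_n)$ is not consumed by the ratio route is plausible precisely because you never form the two Laplace approximations whose curvature determinants the paper must cancel, but it can only be credited once the two gaps above are repaired.
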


  If the Bernstein-von Mises Theorem holds for the
  exact posterior distribution,
  under the conditions of Theorem  \ref{thm:posterior},
  it will also hold for the  approximate posterior
  distribution $\tilde \pi(\theta | y) \null$. In that case,
  credible regions formed from the approximate posterior
  distribution will also be valid confidence sets.

\subsection{Adjusted approximate likelihood inference}

If Lemma \ref{lemma:distance_estimators} holds for some
$a_n > r_n^{1/2}$, then $\tilde \theta_n$ will still
be a consistent estimator, but
might not have the same limiting distribution as $\hat \theta_n$.
The approximate likelihood may still be useful in practice,
provided that the inference is adjusted accordingly.

The sandwich information matrix \citep{Godambe1960} is
$G_n(\theta) = I_n(\theta) H_n(\theta)^{-1} I_n(\theta)$
where $H_n(\theta) = \text{var} \{ \nabla_\theta \tilde u_n(\theta) \}$
and $I_n(\theta) = E\{\tilde J_n(\theta)\}$.
Under suitable regularity conditions,
\[s_n^{1/2} (\tilde \theta_n - \theta_0) \rightarrow N(0, \bar G(\theta_0)^{-1})\]
in distribution, where $s_n$ 
 is the rate of convergence of
 $\tilde \theta_n$, chosen such that $G_n(\theta_0) = O(s_n)$,
 and $\bar G(\theta) = \lim_{n \rightarrow \infty} s_n^{-1} G_n(\theta)$.

Composite likelihood estimators \citep{Lindsay1988} also have
this type of asymptotic behaviour, and many methods
which have been proposed to adjust inference using a
composite likelihood could also be used to adjust the inference
with an approximate likelihood, provided that $\tilde \theta_n$
is a consistent estimator.
For example, \cite{Varin2011} describe various methods
to approximate the variance of a composite likelihood
estimator, and list some modification to 
the composite likelihood ratio test statistic designed to ensure
that the resulting test statistic has approximately $\chi^2_{p-q}$
distribution. From a Bayesian perspective, the adjustments proposed
by \cite{Pauli2011} and \cite{Ribatet2012}
to posterior distributions based on composite likelihoods
 could also be used in the context of approximate likelihood inference.
  
\section{Examples}
\subsection{Laplace approximation to the likelihood in a simple
latent variable model}
\label{example:two_level}

\subsubsection{A two-level model}

  Suppose $Y_{i} \sim \text{Binomial}(m, p_{i})$, where
  $\text{logit}(p_{i}) = b_i$ and $b_i \sim N(0, \theta^2)$,
  for $i= 1, \ldots, n$.
  The likelihood is
  \begin{align*}
  L_n(\theta) &= \int_{\mathbb{R}^n} \prod_{i=1}^n
  \{\text{logit}^{-1}(b_i)\}^{y_{i}} \{1 - \text{logit}^{-1}(b_i)\}^{m - y_i} \phi(b_i; 0, \theta) db \\
  &=\prod_{i=1}^n L_{(i)}(\theta),
  \end{align*}
  where
 \[L_{(i)}(\theta) =  \int_{-\infty}^\infty \{\text{logit}^{-1}(b_i)\}^{y_{i}} \{1 - \text{logit}^{-1}(b_i)\}^{m_n - y_i} \phi(b_i; 0, \theta) db_i \]
  and
  $\phi(.; \mu, \sigma)$ is the $N(\mu, \sigma^2)$ density
  function.

  If we take a Laplace approximation $\tilde L_n(\theta)$
  to the likelihood,
  it is intuitively clear that $m = m_n$ will have to grow with $n$ to give
  valid inference as $n \rightarrow \infty$. It is less obvious whether
  any choice of $m_n$ that grows with $n$ will give
  valid inference, or whether $m_n$ needs to grow with $n$ at
  some minimum rate.
  \cite{Rue2009} suggest that any $m_n$ which grows with $n$ will suffice,
  conjecturing that
  ``the error rate'' is [number of latent variables / number
    of observations], although they note that this rate
  is not established rigorously. In this case, the error rate refers to the
  error in approximating $\pi(\theta | y)$ with $\tilde \pi(\theta | y)$,
  found by using a Laplace approximation to the likelihood.
  The Integrated Nested Laplace Approximations proposed by
  \cite{Rue2009} are based on this $\tilde \pi(\theta | y)$, with
  further approximations used to approximate the marginal posterior
  distribution of each component of $\theta$,
  if $p > 1 \null$. In this example $\theta$ is a scalar, so the Integrated
  Nested Laplace Approximation to the posterior
  distribution is precisely $\tilde \pi(\theta | y)$.

\subsubsection{Theoretical analysis}
  The factorization of the likelihood allows us to study
  the error for each item
  $\epsilon_{(i)}(\theta) = \ell_{(i)}(\theta) - \ell_{(i)}(\theta)$
  separately, and then combine the errors.
In the supplementary materials, we show that 
for each fixed $\theta \in \Theta$,
  \[\delta_{(i)}(\theta) = \norm{\nabla_\theta \epsilon_{(i)}(\theta)} = O_p(m_n^{-2}),\]
  so
  $\delta_n(\theta) \leq \sum_{i=1}^n \delta_{(i)}(\theta) = O_p(n m_n^{-2})$.
  
  However, the conditions are in terms
  of uniform rather than pointwise errors. Since
   $\delta_{(i)}(\theta)$ is maximized at a point
  $\theta^* = O_p(m_n^{-1/2})$, and decreasing for all
  $\theta > \theta^* \null$,
  \[\delta_{(i)}^\infty = \delta_{(i)}(\theta^*) = O_p(m_n^{-1/2}),\] 
  so $\delta_n^\infty = O_p(n m_n^{-1/2})$.

 The amount of information that the data provides about $\theta$
  is bounded for fixed $n$ as $m_n \rightarrow \infty$,
  by the available information on $\theta$
  given the value of each $b_i \null$.
  So if $m_n \rightarrow \infty$ as $n \rightarrow \infty$, then
  $r_n = O(n)$, and since
  $\delta_n^\infty = o_p(r_n)$,  $\tilde \theta_n$ will
  be consistent.

  Provided that $\theta_0 \not = 0$, choose any
  fixed $t \in (0, \theta_0)$. Then, for sufficiently large
  $n$,
  \[\delta_n^{\infty}\{B_t(\theta_0)\} = \delta_n(\theta_0 - t) = O_p(n m_n^{-2})\]
  and
  \[\gamma_n^\infty\{B_t(\theta_0)\} = \gamma_n(\theta_0 - t) = O_p(n m_n^{-2}).\]
  If $m_n$ grows at a rate faster
  than $n^{1/4}$, then $\delta_n^{\infty}\{B_t(\theta_0)\} = o_p(n^{1/2})$
  and $\gamma_n^{\infty}\{B_t(\theta_0)\} = o_p(n)$, so the
  Laplace approximation to the 
likelihood will give first-order correct inference.

\subsubsection{Numerical demonstration}
  \begin{figure}
    \begin{subfigure}[b]{0.5 \textwidth}
\begin{tikzpicture}[x=1pt,y=1pt]
\definecolor{fillColor}{RGB}{255,255,255}
\path[use as bounding box,fill=fillColor,fill opacity=0.00] (0,0) rectangle (173.45,144.54);
\begin{scope}
\path[clip] ( 24.00, 24.00) rectangle (166.25,138.54);
\definecolor{drawColor}{RGB}{0,0,0}

\path[draw=drawColor,line width= 0.4pt,dash pattern=on 1pt off 3pt ,line join=round,line cap=round] ( 29.27, 28.24) --
	( 43.90, 52.87) --
	( 58.54, 69.84) --
	( 73.17, 83.20) --
	( 87.81, 94.38) --
	(102.44,104.09) --
	(117.08,112.72) --
	(131.71,120.53) --
	(146.34,127.68) --
	(160.98,134.30);
\end{scope}
\begin{scope}
\path[clip] (  0.00,  0.00) rectangle (173.45,144.54);
\definecolor{drawColor}{RGB}{0,0,0}

\path[draw=drawColor,line width= 0.4pt,line join=round,line cap=round] ( 43.90, 24.00) -- (160.98, 24.00);

\path[draw=drawColor,line width= 0.4pt,line join=round,line cap=round] ( 43.90, 24.00) -- ( 43.90, 18.00);

\path[draw=drawColor,line width= 0.4pt,line join=round,line cap=round] ( 73.17, 24.00) -- ( 73.17, 18.00);

\path[draw=drawColor,line width= 0.4pt,line join=round,line cap=round] (102.44, 24.00) -- (102.44, 18.00);

\path[draw=drawColor,line width= 0.4pt,line join=round,line cap=round] (131.71, 24.00) -- (131.71, 18.00);

\path[draw=drawColor,line width= 0.4pt,line join=round,line cap=round] (160.98, 24.00) -- (160.98, 18.00);

\node[text=drawColor,anchor=base,inner sep=0pt, outer sep=0pt, scale=  1.00] at ( 43.90,  8.40) {2000};

\node[text=drawColor,anchor=base,inner sep=0pt, outer sep=0pt, scale=  1.00] at ( 73.17,  8.40) {4000};

\node[text=drawColor,anchor=base,inner sep=0pt, outer sep=0pt, scale=  1.00] at (102.44,  8.40) {6000};

\node[text=drawColor,anchor=base,inner sep=0pt, outer sep=0pt, scale=  1.00] at (131.71,  8.40) {8000};

\path[draw=drawColor,line width= 0.4pt,line join=round,line cap=round] ( 24.00, 28.24) -- ( 24.00,128.86);

\path[draw=drawColor,line width= 0.4pt,line join=round,line cap=round] ( 24.00, 28.24) -- ( 18.00, 28.24);

\path[draw=drawColor,line width= 0.4pt,line join=round,line cap=round] ( 24.00, 45.01) -- ( 18.00, 45.01);

\path[draw=drawColor,line width= 0.4pt,line join=round,line cap=round] ( 24.00, 61.78) -- ( 18.00, 61.78);

\path[draw=drawColor,line width= 0.4pt,line join=round,line cap=round] ( 24.00, 78.55) -- ( 18.00, 78.55);

\path[draw=drawColor,line width= 0.4pt,line join=round,line cap=round] ( 24.00, 95.32) -- ( 18.00, 95.32);

\path[draw=drawColor,line width= 0.4pt,line join=round,line cap=round] ( 24.00,112.09) -- ( 18.00,112.09);

\path[draw=drawColor,line width= 0.4pt,line join=round,line cap=round] ( 24.00,128.86) -- ( 18.00,128.86);

\node[text=drawColor,rotate= 90.00,anchor=base,inner sep=0pt, outer sep=0pt, scale=  1.00] at ( 15.60, 28.24) {5};

\node[text=drawColor,rotate= 90.00,anchor=base,inner sep=0pt, outer sep=0pt, scale=  1.00] at ( 15.60, 45.01) {10};

\node[text=drawColor,rotate= 90.00,anchor=base,inner sep=0pt, outer sep=0pt, scale=  1.00] at ( 15.60, 78.55) {20};

\node[text=drawColor,rotate= 90.00,anchor=base,inner sep=0pt, outer sep=0pt, scale=  1.00] at ( 15.60,112.09) {30};

\path[draw=drawColor,line width= 0.4pt,line join=round,line cap=round] ( 24.00, 24.00) --
	(166.25, 24.00) --
	(166.25,138.54) --
	( 24.00,138.54) --
	( 24.00, 24.00);
\end{scope}
\begin{scope}
\path[clip] ( 24.00, 24.00) rectangle (166.25,138.54);
\definecolor{drawColor}{RGB}{0,0,0}

\path[draw=drawColor,line width= 0.4pt,line join=round,line cap=round] ( 29.27, 28.24) --
	( 43.90, 42.52) --
	( 58.54, 52.09) --
	( 73.17, 59.49) --
	( 87.81, 65.61) --
	(102.44, 70.87) --
	(117.08, 75.51) --
	(131.71, 79.68) --
	(146.34, 83.47) --
	(160.98, 86.95);

\path[draw=drawColor,line width= 0.4pt,dash pattern=on 4pt off 4pt ,line join=round,line cap=round] ( 29.27, 28.24) --
	( 43.90, 36.18) --
	( 58.54, 41.37) --
	( 73.17, 45.31) --
	( 87.81, 48.52) --
	(102.44, 51.26) --
	(117.08, 53.65) --
	(131.71, 55.78) --
	(146.34, 57.71) --
	(160.98, 59.48);
\end{scope}
\end{tikzpicture}
      \vspace{-0.5cm}
    \caption{\label{fig:m_n_two_level} $m_n$}
    \end{subfigure}
    \begin{subfigure}[b]{0.5 \textwidth}
\begin{tikzpicture}[x=1pt,y=1pt]
\definecolor{fillColor}{RGB}{255,255,255}
\path[use as bounding box,fill=fillColor,fill opacity=0.00] (0,0) rectangle (173.45,144.54);
\begin{scope}
\path[clip] (  0.00,  0.00) rectangle (173.45,144.54);
\definecolor{drawColor}{RGB}{0,0,0}

\path[draw=drawColor,line width= 0.4pt,line join=round,line cap=round] ( 43.90, 24.00) -- (160.98, 24.00);

\path[draw=drawColor,line width= 0.4pt,line join=round,line cap=round] ( 43.90, 24.00) -- ( 43.90, 18.00);

\path[draw=drawColor,line width= 0.4pt,line join=round,line cap=round] ( 73.17, 24.00) -- ( 73.17, 18.00);

\path[draw=drawColor,line width= 0.4pt,line join=round,line cap=round] (102.44, 24.00) -- (102.44, 18.00);

\path[draw=drawColor,line width= 0.4pt,line join=round,line cap=round] (131.71, 24.00) -- (131.71, 18.00);

\path[draw=drawColor,line width= 0.4pt,line join=round,line cap=round] (160.98, 24.00) -- (160.98, 18.00);

\node[text=drawColor,anchor=base,inner sep=0pt, outer sep=0pt, scale=  1.00] at ( 43.90,  8.40) {2000};

\node[text=drawColor,anchor=base,inner sep=0pt, outer sep=0pt, scale=  1.00] at ( 73.17,  8.40) {4000};

\node[text=drawColor,anchor=base,inner sep=0pt, outer sep=0pt, scale=  1.00] at (102.44,  8.40) {6000};

\node[text=drawColor,anchor=base,inner sep=0pt, outer sep=0pt, scale=  1.00] at (131.71,  8.40) {8000};

\path[draw=drawColor,line width= 0.4pt,line join=round,line cap=round] ( 24.00, 47.89) -- ( 24.00,125.86);

\path[draw=drawColor,line width= 0.4pt,line join=round,line cap=round] ( 24.00, 47.89) -- ( 18.00, 47.89);

\path[draw=drawColor,line width= 0.4pt,line join=round,line cap=round] ( 24.00, 73.88) -- ( 18.00, 73.88);

\path[draw=drawColor,line width= 0.4pt,line join=round,line cap=round] ( 24.00, 99.87) -- ( 18.00, 99.87);

\path[draw=drawColor,line width= 0.4pt,line join=round,line cap=round] ( 24.00,125.86) -- ( 18.00,125.86);

\node[text=drawColor,rotate= 90.00,anchor=base,inner sep=0pt, outer sep=0pt, scale=  1.00] at ( 15.60, 47.89) {0.6};

\node[text=drawColor,rotate= 90.00,anchor=base,inner sep=0pt, outer sep=0pt, scale=  1.00] at ( 15.60, 73.88) {0.8};

\node[text=drawColor,rotate= 90.00,anchor=base,inner sep=0pt, outer sep=0pt, scale=  1.00] at ( 15.60, 99.87) {1.0};

\node[text=drawColor,rotate= 90.00,anchor=base,inner sep=0pt, outer sep=0pt, scale=  1.00] at ( 15.60,125.86) {1.2};

\path[draw=drawColor,line width= 0.4pt,line join=round,line cap=round] ( 24.00, 24.00) --
	(166.25, 24.00) --
	(166.25,138.54) --
	( 24.00,138.54) --
	( 24.00, 24.00);
\end{scope}
\begin{scope}
\path[clip] ( 24.00, 24.00) rectangle (166.25,138.54);
\definecolor{drawColor}{RGB}{0,0,0}

\path[draw=drawColor,line width= 0.4pt,line join=round,line cap=round] ( 29.27, 84.59) --
	( 43.90, 76.03) --
	( 58.54, 74.42) --
	( 73.17, 73.88) --
	( 87.81, 73.71) --
	(102.44, 73.70) --
	(117.08, 73.77) --
	(131.71, 73.88) --
	(146.34, 74.00) --
	(160.98, 74.13);

\path[draw=drawColor,line width= 0.4pt,dash pattern=on 4pt off 4pt ,line join=round,line cap=round] ( 29.27, 84.54) --
	( 43.90, 95.21) --
	( 58.54,103.62) --
	( 73.17,110.19) --
	( 87.81,115.64) --
	(102.44,120.25) --
	(117.08,124.34) --
	(131.71,127.99) --
	(146.34,131.28) --
	(160.98,134.30);

\path[draw=drawColor,line width= 0.4pt,dash pattern=on 1pt off 3pt ,line join=round,line cap=round] ( 29.27, 84.41) --
	( 43.90, 53.87) --
	( 58.54, 44.62) --
	( 73.17, 39.59) --
	( 87.81, 36.29) --
	(102.44, 33.89) --
	(117.08, 32.04) --
	(131.71, 30.54) --
	(146.34, 29.30) --
	(160.98, 28.24);
\end{scope}
\end{tikzpicture}
       \vspace{-0.5cm}
    \caption{\label{fig:error_score_two_level}
      $\hat r_n^{-1/2} \delta_n(\theta_0)$
    }
    \end{subfigure}
    
    \vspace{0.5cm}
    
    \begin{subfigure}[b]{0.5 \textwidth}
\begin{tikzpicture}[x=1pt,y=1pt]
\definecolor{fillColor}{RGB}{255,255,255}
\path[use as bounding box,fill=fillColor,fill opacity=0.00] (0,0) rectangle (173.45,144.54);
\begin{scope}
\path[clip] (  0.00,  0.00) rectangle (173.45,144.54);
\definecolor{drawColor}{RGB}{0,0,0}

\path[draw=drawColor,line width= 0.4pt,line join=round,line cap=round] ( 43.90, 24.00) -- (160.98, 24.00);

\path[draw=drawColor,line width= 0.4pt,line join=round,line cap=round] ( 43.90, 24.00) -- ( 43.90, 18.00);

\path[draw=drawColor,line width= 0.4pt,line join=round,line cap=round] ( 73.17, 24.00) -- ( 73.17, 18.00);

\path[draw=drawColor,line width= 0.4pt,line join=round,line cap=round] (102.44, 24.00) -- (102.44, 18.00);

\path[draw=drawColor,line width= 0.4pt,line join=round,line cap=round] (131.71, 24.00) -- (131.71, 18.00);

\path[draw=drawColor,line width= 0.4pt,line join=round,line cap=round] (160.98, 24.00) -- (160.98, 18.00);

\node[text=drawColor,anchor=base,inner sep=0pt, outer sep=0pt, scale=  1.00] at ( 43.90,  8.40) {2000};

\node[text=drawColor,anchor=base,inner sep=0pt, outer sep=0pt, scale=  1.00] at ( 73.17,  8.40) {4000};

\node[text=drawColor,anchor=base,inner sep=0pt, outer sep=0pt, scale=  1.00] at (102.44,  8.40) {6000};

\node[text=drawColor,anchor=base,inner sep=0pt, outer sep=0pt, scale=  1.00] at (131.71,  8.40) {8000};

\path[draw=drawColor,line width= 0.4pt,line join=round,line cap=round] ( 24.00, 34.86) -- ( 24.00,129.70);

\path[draw=drawColor,line width= 0.4pt,line join=round,line cap=round] ( 24.00, 34.86) -- ( 18.00, 34.86);

\path[draw=drawColor,line width= 0.4pt,line join=round,line cap=round] ( 24.00, 50.66) -- ( 18.00, 50.66);

\path[draw=drawColor,line width= 0.4pt,line join=round,line cap=round] ( 24.00, 66.47) -- ( 18.00, 66.47);

\path[draw=drawColor,line width= 0.4pt,line join=round,line cap=round] ( 24.00, 82.28) -- ( 18.00, 82.28);

\path[draw=drawColor,line width= 0.4pt,line join=round,line cap=round] ( 24.00, 98.08) -- ( 18.00, 98.08);

\path[draw=drawColor,line width= 0.4pt,line join=round,line cap=round] ( 24.00,113.89) -- ( 18.00,113.89);

\path[draw=drawColor,line width= 0.4pt,line join=round,line cap=round] ( 24.00,129.70) -- ( 18.00,129.70);

\node[text=drawColor,rotate= 90.00,anchor=base,inner sep=0pt, outer sep=0pt, scale=  1.00] at ( 15.60, 34.86) {0.01};

\node[text=drawColor,rotate= 90.00,anchor=base,inner sep=0pt, outer sep=0pt, scale=  1.00] at ( 15.60, 66.47) {0.03};

\node[text=drawColor,rotate= 90.00,anchor=base,inner sep=0pt, outer sep=0pt, scale=  1.00] at ( 15.60, 98.08) {0.05};

\node[text=drawColor,rotate= 90.00,anchor=base,inner sep=0pt, outer sep=0pt, scale=  1.00] at ( 15.60,129.70) {0.07};

\path[draw=drawColor,line width= 0.4pt,line join=round,line cap=round] ( 24.00, 24.00) --
	(166.25, 24.00) --
	(166.25,138.54) --
	( 24.00,138.54) --
	( 24.00, 24.00);
\end{scope}
\begin{scope}
\path[clip] ( 24.00, 24.00) rectangle (166.25,138.54);
\definecolor{drawColor}{RGB}{0,0,0}

\path[draw=drawColor,line width= 0.4pt,line join=round,line cap=round] ( 29.27,134.30) --
	( 43.90, 68.66) --
	( 58.54, 52.95) --
	( 73.17, 45.65) --
	( 87.81, 41.16) --
	(102.44, 38.17) --
	(117.08, 36.24) --
	(131.71, 34.55) --
	(146.34, 33.27) --
	(160.98, 32.36);

\path[draw=drawColor,line width= 0.4pt,dash pattern=on 4pt off 4pt ,line join=round,line cap=round] ( 29.27,134.15) --
	( 43.90, 82.92) --
	( 58.54, 65.14) --
	( 73.17, 56.63) --
	( 87.81, 51.53) --
	(102.44, 47.48) --
	(117.08, 45.13) --
	(131.71, 43.15) --
	(146.34, 41.46) --
	(160.98, 40.12);

\path[draw=drawColor,line width= 0.4pt,dash pattern=on 1pt off 3pt ,line join=round,line cap=round] ( 29.27,132.15) --
	( 43.90, 57.30) --
	( 58.54, 43.88) --
	( 73.17, 37.89) --
	( 87.81, 34.89) --
	(102.44, 32.70) --
	(117.08, 31.16) --
	(131.71, 30.03) --
	(146.34, 29.02) --
	(160.98, 28.24);
\end{scope}
\end{tikzpicture}
       \vspace{-0.5cm}
    \caption{\label{fig:rmse_laplace_two_level}
      $\text{RMSE}(\tilde \theta_n)$}
     \end{subfigure}
     \begin{subfigure}[b]{0.5 \textwidth}
\begin{tikzpicture}[x=1pt,y=1pt]
\definecolor{fillColor}{RGB}{255,255,255}
\path[use as bounding box,fill=fillColor,fill opacity=0.00] (0,0) rectangle (173.45,144.54);
\begin{scope}
\path[clip] (  0.00,  0.00) rectangle (173.45,144.54);
\definecolor{drawColor}{RGB}{0,0,0}

\path[draw=drawColor,line width= 0.4pt,line join=round,line cap=round] ( 43.90, 24.00) -- (160.98, 24.00);

\path[draw=drawColor,line width= 0.4pt,line join=round,line cap=round] ( 43.90, 24.00) -- ( 43.90, 18.00);

\path[draw=drawColor,line width= 0.4pt,line join=round,line cap=round] ( 73.17, 24.00) -- ( 73.17, 18.00);

\path[draw=drawColor,line width= 0.4pt,line join=round,line cap=round] (102.44, 24.00) -- (102.44, 18.00);

\path[draw=drawColor,line width= 0.4pt,line join=round,line cap=round] (131.71, 24.00) -- (131.71, 18.00);

\path[draw=drawColor,line width= 0.4pt,line join=round,line cap=round] (160.98, 24.00) -- (160.98, 18.00);

\node[text=drawColor,anchor=base,inner sep=0pt, outer sep=0pt, scale=  1.00] at ( 43.90,  8.40) {2000};

\node[text=drawColor,anchor=base,inner sep=0pt, outer sep=0pt, scale=  1.00] at ( 73.17,  8.40) {4000};

\node[text=drawColor,anchor=base,inner sep=0pt, outer sep=0pt, scale=  1.00] at (102.44,  8.40) {6000};

\node[text=drawColor,anchor=base,inner sep=0pt, outer sep=0pt, scale=  1.00] at (131.71,  8.40) {8000};

\path[draw=drawColor,line width= 0.4pt,line join=round,line cap=round] ( 24.00, 29.28) -- ( 24.00,133.83);

\path[draw=drawColor,line width= 0.4pt,line join=round,line cap=round] ( 24.00, 29.28) -- ( 18.00, 29.28);

\path[draw=drawColor,line width= 0.4pt,line join=round,line cap=round] ( 24.00, 50.19) -- ( 18.00, 50.19);

\path[draw=drawColor,line width= 0.4pt,line join=round,line cap=round] ( 24.00, 71.10) -- ( 18.00, 71.10);

\path[draw=drawColor,line width= 0.4pt,line join=round,line cap=round] ( 24.00, 92.01) -- ( 18.00, 92.01);

\path[draw=drawColor,line width= 0.4pt,line join=round,line cap=round] ( 24.00,112.92) -- ( 18.00,112.92);

\path[draw=drawColor,line width= 0.4pt,line join=round,line cap=round] ( 24.00,133.83) -- ( 18.00,133.83);

\node[text=drawColor,rotate= 90.00,anchor=base,inner sep=0pt, outer sep=0pt, scale=  1.00] at ( 15.60, 29.28) {1.1};

\node[text=drawColor,rotate= 90.00,anchor=base,inner sep=0pt, outer sep=0pt, scale=  1.00] at ( 15.60, 71.10) {1.3};

\node[text=drawColor,rotate= 90.00,anchor=base,inner sep=0pt, outer sep=0pt, scale=  1.00] at ( 15.60,112.92) {1.5};

\path[draw=drawColor,line width= 0.4pt,line join=round,line cap=round] ( 24.00, 24.00) --
	(166.25, 24.00) --
	(166.25,138.54) --
	( 24.00,138.54) --
	( 24.00, 24.00);
\end{scope}
\begin{scope}
\path[clip] ( 24.00, 24.00) rectangle (166.25,138.54);
\definecolor{drawColor}{RGB}{0,0,0}

\path[draw=drawColor,line width= 0.4pt,line join=round,line cap=round] ( 29.27, 60.46) --
	( 43.90, 65.84) --
	( 58.54, 65.44) --
	( 73.17, 63.11) --
	( 87.81, 62.57) --
	(102.44, 63.18) --
	(117.08, 64.49) --
	(131.71, 67.19) --
	(146.34, 67.13) --
	(160.98, 68.26);

\path[draw=drawColor,line width= 0.4pt,dash pattern=on 4pt off 4pt ,line join=round,line cap=round] ( 29.27, 58.41) --
	( 43.90, 81.96) --
	( 58.54, 94.18) --
	( 73.17,102.35) --
	( 87.81,111.09) --
	(102.44,114.65) --
	(117.08,123.76) --
	(131.71,125.16) --
	(146.34,130.93) --
	(160.98,134.30);

\path[draw=drawColor,line width= 0.4pt,dash pattern=on 1pt off 3pt ,line join=round,line cap=round] ( 29.27, 58.09) --
	( 43.90, 47.37) --
	( 58.54, 38.64) --
	( 73.17, 36.65) --
	( 87.81, 33.72) --
	(102.44, 33.31) --
	(117.08, 31.94) --
	(131.71, 30.88) --
	(146.34, 29.36) --
	(160.98, 28.24);
\end{scope}
\end{tikzpicture}
       \vspace{-0.5cm}
    \caption{\label{fig:relative_rmse_two_level}
      $\text{RMSE}(\tilde \theta_n) / \text{RMSE}(\hat \theta_n)$}
     \end{subfigure}
     
    \vspace{0.5cm}
    
    \begin{subfigure}[b]{0.5 \textwidth}
\begin{tikzpicture}[x=1pt,y=1pt]
\definecolor{fillColor}{RGB}{255,255,255}
\path[use as bounding box,fill=fillColor,fill opacity=0.00] (0,0) rectangle (173.45,144.54);
\begin{scope}
\path[clip] (  0.00,  0.00) rectangle (173.45,144.54);
\definecolor{drawColor}{RGB}{0,0,0}

\path[draw=drawColor,line width= 0.4pt,line join=round,line cap=round] ( 43.90, 24.00) -- (160.98, 24.00);

\path[draw=drawColor,line width= 0.4pt,line join=round,line cap=round] ( 43.90, 24.00) -- ( 43.90, 18.00);

\path[draw=drawColor,line width= 0.4pt,line join=round,line cap=round] ( 73.17, 24.00) -- ( 73.17, 18.00);

\path[draw=drawColor,line width= 0.4pt,line join=round,line cap=round] (102.44, 24.00) -- (102.44, 18.00);

\path[draw=drawColor,line width= 0.4pt,line join=round,line cap=round] (131.71, 24.00) -- (131.71, 18.00);

\path[draw=drawColor,line width= 0.4pt,line join=round,line cap=round] (160.98, 24.00) -- (160.98, 18.00);

\node[text=drawColor,anchor=base,inner sep=0pt, outer sep=0pt, scale=  1.00] at ( 43.90,  8.40) {2000};

\node[text=drawColor,anchor=base,inner sep=0pt, outer sep=0pt, scale=  1.00] at ( 73.17,  8.40) {4000};

\node[text=drawColor,anchor=base,inner sep=0pt, outer sep=0pt, scale=  1.00] at (102.44,  8.40) {6000};

\node[text=drawColor,anchor=base,inner sep=0pt, outer sep=0pt, scale=  1.00] at (131.71,  8.40) {8000};

\path[draw=drawColor,line width= 0.4pt,line join=round,line cap=round] ( 24.00, 30.64) -- ( 24.00,132.19);

\path[draw=drawColor,line width= 0.4pt,line join=round,line cap=round] ( 24.00, 30.64) -- ( 18.00, 30.64);

\path[draw=drawColor,line width= 0.4pt,line join=round,line cap=round] ( 24.00, 50.95) -- ( 18.00, 50.95);

\path[draw=drawColor,line width= 0.4pt,line join=round,line cap=round] ( 24.00, 71.26) -- ( 18.00, 71.26);

\path[draw=drawColor,line width= 0.4pt,line join=round,line cap=round] ( 24.00, 91.57) -- ( 18.00, 91.57);

\path[draw=drawColor,line width= 0.4pt,line join=round,line cap=round] ( 24.00,111.88) -- ( 18.00,111.88);

\path[draw=drawColor,line width= 0.4pt,line join=round,line cap=round] ( 24.00,132.19) -- ( 18.00,132.19);

\node[text=drawColor,rotate= 90.00,anchor=base,inner sep=0pt, outer sep=0pt, scale=  1.00] at ( 15.60, 30.64) {0.65};

\node[text=drawColor,rotate= 90.00,anchor=base,inner sep=0pt, outer sep=0pt, scale=  1.00] at ( 15.60, 71.26) {0.75};

\node[text=drawColor,rotate= 90.00,anchor=base,inner sep=0pt, outer sep=0pt, scale=  1.00] at ( 15.60,111.88) {0.85};

\path[draw=drawColor,line width= 0.4pt,line join=round,line cap=round] ( 24.00, 24.00) --
	(166.25, 24.00) --
	(166.25,138.54) --
	( 24.00,138.54) --
	( 24.00, 24.00);
\end{scope}
\begin{scope}
\path[clip] ( 24.00, 24.00) rectangle (166.25,138.54);
\definecolor{drawColor}{RGB}{0,0,0}

\path[draw=drawColor,line width= 0.4pt,line join=round,line cap=round] ( 29.27, 88.76) --
	( 43.90, 89.45) --
	( 58.54, 87.55) --
	( 73.17, 89.33) --
	( 87.81, 90.39) --
	(102.44, 88.80) --
	(117.08, 87.18) --
	(131.71, 88.64) --
	(146.34, 89.17) --
	(160.98, 86.94);

\path[draw=drawColor,line width= 0.4pt,dash pattern=on 4pt off 4pt ,line join=round,line cap=round] ( 29.27, 87.50) --
	( 43.90, 70.40) --
	( 58.54, 64.47) --
	( 73.17, 58.22) --
	( 87.81, 49.28) --
	(102.44, 49.69) --
	(117.08, 41.08) --
	(131.71, 37.95) --
	(146.34, 33.04) --
	(160.98, 28.24);

\path[draw=drawColor,line width= 0.4pt,dash pattern=on 1pt off 3pt ,line join=round,line cap=round] ( 29.27, 91.57) --
	( 43.90,102.94) --
	( 58.54,108.87) --
	( 73.17,114.19) --
	( 87.81,112.40) --
	(102.44,116.71) --
	(117.08,114.15) --
	(131.71,116.55) --
	(146.34,119.23) --
	(160.98,120.53);

\path[draw=drawColor,line width= 0.4pt,line join=round,line cap=round] ( 29.27,132.79) --
	( 43.90,132.55) --
	( 58.54,130.97) --
	( 73.17,130.03) --
	( 87.81,131.13) --
	(102.44,131.94) --
	(117.08,130.60) --
	(131.71,132.55) --
	(146.34,133.49) --
	(160.98,132.14);

\path[draw=drawColor,line width= 0.4pt,dash pattern=on 4pt off 4pt ,line join=round,line cap=round] ( 29.27,132.06) --
	( 43.90,132.10) --
	( 58.54,132.79) --
	( 73.17,132.39) --
	( 87.81,133.36) --
	(102.44,134.30) --
	(117.08,133.97) --
	(131.71,132.10) --
	(146.34,132.31) --
	(160.98,133.57);

\path[draw=drawColor,line width= 0.4pt,dash pattern=on 1pt off 3pt ,line join=round,line cap=round] ( 29.27,133.16) --
	( 43.90,131.90) --
	( 58.54,131.82) --
	( 73.17,133.04) --
	( 87.81,129.67) --
	(102.44,132.75) --
	(117.08,133.61) --
	(131.71,132.43) --
	(146.34,133.85) --
	(160.98,132.96);
\end{scope}
\end{tikzpicture}
      \vspace{-0.5cm}
    \caption{\label{fig:coverage_laplace_two_level}
      Coverage of $90 \%$ confidence intervals}
    \end{subfigure}
          \begin{subfigure}[b]{0.5 \textwidth}
\begin{tikzpicture}[x=1pt,y=1pt]
\definecolor{fillColor}{RGB}{255,255,255}
\path[use as bounding box,fill=fillColor,fill opacity=0.00] (0,0) rectangle (173.45,144.54);
\begin{scope}
\path[clip] (  0.00,  0.00) rectangle (173.45,144.54);
\definecolor{drawColor}{RGB}{0,0,0}

\path[draw=drawColor,line width= 0.4pt,line join=round,line cap=round] ( 43.90, 24.00) -- (160.98, 24.00);

\path[draw=drawColor,line width= 0.4pt,line join=round,line cap=round] ( 43.90, 24.00) -- ( 43.90, 18.00);

\path[draw=drawColor,line width= 0.4pt,line join=round,line cap=round] ( 73.17, 24.00) -- ( 73.17, 18.00);

\path[draw=drawColor,line width= 0.4pt,line join=round,line cap=round] (102.44, 24.00) -- (102.44, 18.00);

\path[draw=drawColor,line width= 0.4pt,line join=round,line cap=round] (131.71, 24.00) -- (131.71, 18.00);

\path[draw=drawColor,line width= 0.4pt,line join=round,line cap=round] (160.98, 24.00) -- (160.98, 18.00);

\node[text=drawColor,anchor=base,inner sep=0pt, outer sep=0pt, scale=  1.00] at ( 43.90,  8.40) {2000};

\node[text=drawColor,anchor=base,inner sep=0pt, outer sep=0pt, scale=  1.00] at ( 73.17,  8.40) {4000};

\node[text=drawColor,anchor=base,inner sep=0pt, outer sep=0pt, scale=  1.00] at (102.44,  8.40) {6000};

\node[text=drawColor,anchor=base,inner sep=0pt, outer sep=0pt, scale=  1.00] at (131.71,  8.40) {8000};

\path[draw=drawColor,line width= 0.4pt,line join=round,line cap=round] ( 24.00, 36.41) -- ( 24.00,127.48);

\path[draw=drawColor,line width= 0.4pt,line join=round,line cap=round] ( 24.00, 36.41) -- ( 18.00, 36.41);

\path[draw=drawColor,line width= 0.4pt,line join=round,line cap=round] ( 24.00, 54.63) -- ( 18.00, 54.63);

\path[draw=drawColor,line width= 0.4pt,line join=round,line cap=round] ( 24.00, 72.84) -- ( 18.00, 72.84);

\path[draw=drawColor,line width= 0.4pt,line join=round,line cap=round] ( 24.00, 91.05) -- ( 18.00, 91.05);

\path[draw=drawColor,line width= 0.4pt,line join=round,line cap=round] ( 24.00,109.27) -- ( 18.00,109.27);

\path[draw=drawColor,line width= 0.4pt,line join=round,line cap=round] ( 24.00,127.48) -- ( 18.00,127.48);

\node[text=drawColor,rotate= 90.00,anchor=base,inner sep=0pt, outer sep=0pt, scale=  1.00] at ( 15.60, 36.41) {0.20};

\node[text=drawColor,rotate= 90.00,anchor=base,inner sep=0pt, outer sep=0pt, scale=  1.00] at ( 15.60, 72.84) {0.30};

\node[text=drawColor,rotate= 90.00,anchor=base,inner sep=0pt, outer sep=0pt, scale=  1.00] at ( 15.60,109.27) {0.40};

\path[draw=drawColor,line width= 0.4pt,line join=round,line cap=round] ( 24.00, 24.00) --
	(166.25, 24.00) --
	(166.25,138.54) --
	( 24.00,138.54) --
	( 24.00, 24.00);
\end{scope}
\begin{scope}
\path[clip] ( 24.00, 24.00) rectangle (166.25,138.54);
\definecolor{drawColor}{RGB}{0,0,0}

\path[draw=drawColor,line width= 0.4pt,line join=round,line cap=round] ( 29.27, 75.20) --
	( 43.90, 75.98) --
	( 58.54, 75.74) --
	( 73.17, 75.70) --
	( 87.81, 75.80) --
	(102.44, 75.95) --
	(117.08, 76.12) --
	(131.71, 76.32) --
	(146.34, 76.50) --
	(160.98, 76.69);

\path[draw=drawColor,line width= 0.4pt,dash pattern=on 4pt off 4pt ,line join=round,line cap=round] ( 29.27, 75.34) --
	( 43.90, 92.70) --
	( 58.54,102.99) --
	( 73.17,110.21) --
	( 87.81,115.89) --
	(102.44,120.63) --
	(117.08,125.97) --
	(131.71,128.24) --
	(146.34,131.42) --
	(160.98,134.30);

\path[draw=drawColor,line width= 0.4pt,dash pattern=on 1pt off 3pt ,line join=round,line cap=round] ( 29.27, 75.62) --
	( 43.90, 54.48) --
	( 58.54, 45.43) --
	( 73.17, 40.26) --
	( 87.81, 36.80) --
	(102.44, 34.27) --
	(117.08, 32.30) --
	(131.71, 30.71) --
	(146.34, 29.38) --
	(160.98, 28.24);
\end{scope}
\end{tikzpicture}
       \vspace{-0.5cm}
       \caption{\label{fig:tvd_two_level}
       $d_{TV}\{\tilde \pi(\theta | y), \pi(\theta | y)\}$}
      \end{subfigure}
          \caption{Comparison of exact and approximate likelihood
            inference, for
            a two-level model with $m_n$ observations on each of $n$ items,
       where $m_n = \min\{1, 5 + 4 (n^a - 1000^a)\}$, for 
       $a = 0.2$ (dashed lines), $0.25$ (solid lines) and $0.3$ (dotted lines).
       $\text{RMSE}(.)$ denotes root mean squared error.}
  \end{figure}
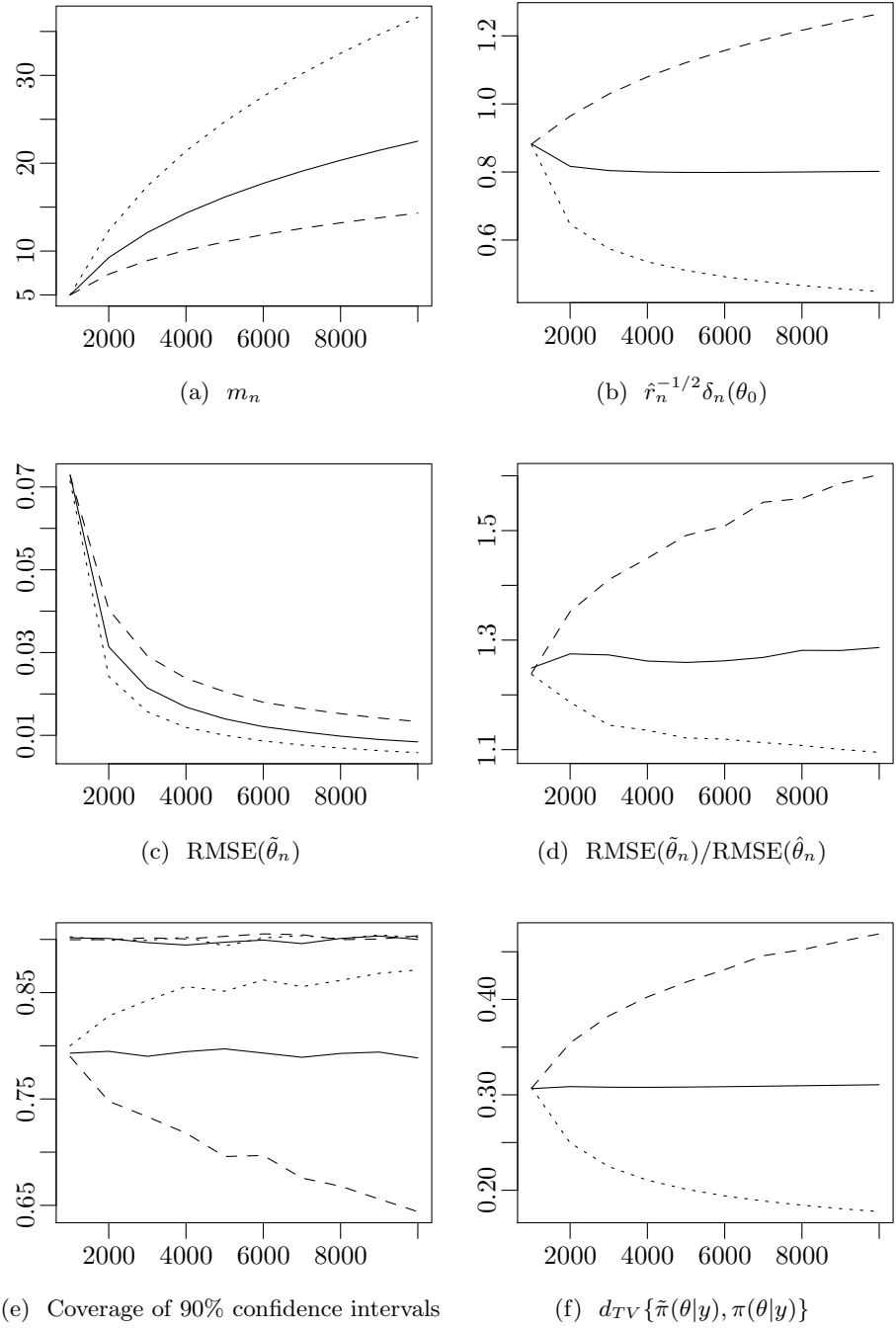
  
  To verify these results numerically, we
simulate $10000$ realizations from the model with $\theta_0 = 0.5$,
  various values
  of $n$ between $1000$ and $10000$,
  and $m_n = \min\{1, 5 + 4 (n^a - 1000^a)\}$, for
  $a = 0.2$, $0.25$ or $0.3 \null$. The three choices of $m_n$ are shown in Figure
  \ref{fig:m_n_two_level}.
  
  A very accurate approximation to the likelihood
  may be obtained by using
  adaptive Gaussian quadrature with $20$ quadrature points
  to approximate each of the univariate integrals $L_{(i)}(\theta) \null$.
  We use this approximation as a proxy for the exact likelihood $L(\theta) \null$.

  As $n \rightarrow \infty$, we have $r_n = O(n)$, but for
  smaller sample sizes $E(\norm{J_n(\hat \theta_n)})$ still grows with $m_n$.
  $E(\norm{J_n(\hat \theta_n)})$ may be approximated by
  $\hat r_n = \hat E(\norm{J_n(\hat \theta_n)})$,
  where $\hat E(.)$ the sample mean
  over the $10000$ realizations.
  The functional form of $m_n$ was chosen to make
  $\hat r_n^{-1/2} \delta_n(\theta_0)$
  approximately constant in the $a = 0.25$ case, as shown
  in Figure \ref{fig:error_score_two_level}. The same quantity
  grows with $n$ for $a = 0.2$ and shrinks with $n$ for $a = 0.3$.

  Figure \ref{fig:rmse_laplace_two_level} shows the
  root mean squared error in the Laplace estimator,
  and as expected, the estimator is consistent
  for all three choices of $a$.
  The root mean squared error of the Laplace
  estimator divided by that of the maximum likelihood
  estimator, shown
  in Figure \ref{fig:relative_rmse_two_level}, grows in the
  $a=0.2$ case, stays approximately constant if $a=0.25$,
  and shrinks towards $1$ if $a = 0.3$.

  Figure \ref{fig:coverage_laplace_two_level}
  shows the actual coverage of likelihood ratio type
  confidence intervals for $\theta$,
  of nominal $90 \%$ coverage. The upper three lines
  show the coverage of the intervals constructed
  using the exact likelihood, which have very close
  to nominal coverage for each $a$. The lower three lines
  show the coverage of the approximate likelihood intervals,
  which decreases with $n$ for $a = 0.2$, and increases
  towards the nominal $90 \%$ level for $a = 0.3$.

     Figure \ref{fig:tvd_two_level} shows the
     total variation distance between $\tilde \pi(\theta|y)$
     and $\pi(\theta|y)$, with prior $\pi(\theta) \propto 1/\theta \null$.
     The distance between the approximate
    posterior and the exact posterior grows in the
  $a=0.2$ case, stays approximately constant if $a=0.25$,
    and shrinks towards zero if $a = 0.3$.
  The behaviour for $a = 0.2$ refutes the conjecture of
  \cite{Rue2009} that the error in the approximate
  posterior distribution should shrink to zero
  provided that $m_n$ grows
  with $n$.

  \subsection{Reduced dependence approximation to the likelihood in an Ising model}

  \subsubsection{An Ising model}
  \label{sec:ising}
  We consider a simple Ising model for $n = rc$
  variables $y_i \in \{-1, 1\}$, arranged
  on an $r \times c$ lattice, with parameters $\theta = (\alpha, \beta)$,
  so that
  \[pr(Y= y; \theta) =
      \{Z_{r, c}(\theta)\}^{-1} \exp\{\alpha V_0(y) + \beta V_1(y)\},\]
      where $V_0(y) = \sum_{i} y_i$, and
      $V_1(y) = \sum_{i \sim j} y_i y_j$. Here
 $i \sim j$ indicates that $i$ and $j$ have an edge between
      them in the lattice, and $Z_{r, c}(\theta) = \sum_{y \in \{-1, 1\}^n}   \exp\{\alpha V_0(y) + \beta V_1(y)\}$
  is a normalizing constant. The likelihood function
  $L(\theta; y) = pr(Y = y; \theta)$ depends on
  $Z_{r, c}(\theta)$, and it is the
  computation of this normalizing constant which makes
  evaluation of the likelihood function difficult.
  By using variable elimination
  \citep[e.g.][]{Jordan2004},
  $Z_{r, c}(\theta)$
  may be computed at cost $O(rc 2^{\min\{r, c\}})$,
  which remains infeasibly expensive if both $r$ and $c$ are large.
  
  \subsubsection{Reduced dependence approximations}
  Many methods for approximating $Z_{r, c}(\theta)$ have been proposed.
  We will study properties of inference using the reduced
  dependence approximations introduced by \cite{Friel2009},
  a family of approximations
  controlled by
  a positive integer tuning parameter, which we call $k$.
  The approximation for fixed $k$ is
  $\tilde Z_{r, c}^{(k)}(\theta) = \{Z_{k, c}(\theta)\}^{r - k + 1} / \{Z_{k - 1, c}(\theta)\}^{r - k}.$
  
  We consider the case $r = c = m$, using a reduced dependence approximation
  at level $k$ to approximate  to the likelihood, giving
  $\tilde L^{(k)}_m(\beta) = \{\tilde Z_{m,  m}^{(k)}(\theta)\}^{-1} \exp\{\alpha V_0(y) + \beta V_1(y)\}.$
   The exact likelihood may be computed at cost
  $O(m^2 2^m)$, and the reduced
  dependence approximation at level $k$ at cost $O(m^2 + k m 2^{k})$. The aim
  is to understand how $k = k_m$ should vary with $m$ to give
  asymptotically valid inference as $m \rightarrow \infty.$
  The error in the log-likelihood,
  \[\epsilon^{(k)}_m(\beta) = \tilde \ell^{(k)}_m(\beta) - \ell_m(\beta) = \log Z_{m, m}(\beta) - \log \tilde Z_{m, m}^{(k)}(\beta),\]
  does not depend on the data $y$, so we do not need to consider
  the statements in probability: all of the errors are
  deterministic in this case.

  \subsubsection{A special case where the exact likelihood is available}
 
  If $\alpha = 0$, and the lattice has periodic boundary conditions,
  so that the top row of variables are joined to the bottom row,
  and the left row joined to the right, \cite{Kaufman1949}
  provides a relatively simple expression for $Z_{r,c}(0, \beta)$,
  so that it
  is possible to compute the likelihood exactly, even for large
  lattices.

  We restrict the parameter space to $\alpha = 0$, and assume
   $\beta \in [0, 0.43]$.
  This guarantees that
  $\beta < \beta_c = \log(1+\sqrt{2})/2 \approx 0.44$, where
  $\beta_c$ is a critical value at which the behaviour of the
  Ising model suddenly changes, so that for $\beta > \beta_c$ large
  areas of all plus ones or all minus ones are observed.
  If $\beta_0 = \beta_c$, the maximum likelihood estimator
  may not have a normal limiting distribution, so our results
  do not apply to this case.

  The information provided by the data about
  $\beta$ grows at rate $r_m = m^2 \null$.
  In the supplementary material, we show that
  if $k \rightarrow \infty$ as $m \rightarrow \infty$, then
  $\delta^{(k)}_{m}(\beta) = O\{m^2 k \exp(-b_\beta k)\} + o(1)$, where
   $b_\beta = 2 \cosh^{-1}\{-1 + \cosh(2 \beta)^2 / \sinh(2 \beta)\}.$

  For any choice of $k$ which grows with $m$,
  $\sup_{\beta \in [0, 0.43]} \delta_m^{(k)}(\beta) = o(m^2)$, so the approximate
  likelihood estimator will be consistent.

  In order to meet the conditions of Theorem \ref{thm:normality},
  $k = k_m$ should be chosen so that
  $\delta_m^{(k)}\{B_t(\beta_0)\} = o(m)$.
  For any
  $t < \beta_c - \beta_0$, for sufficiently large $m$,
  $\delta_m^{(k)}\{B_t(\beta_0)\} = \delta_m^{(k)}(\beta_0 + t) = O\{m^2 k \exp(-b_{\beta_0+t} k)\} + o(1)$. Since $b_{\beta}$ is a continuous
  function of $\beta$,
  any $k$ such that
  $m^2 k \exp(-b_{\beta_0} k) = o(m)$ will meet this condition
  for sufficiently small $t$.
  This may be achieved by taking $k_m = c_{\beta_0} \log{m}$,
  for any $c_{\beta_0} > b_{\beta_0}^{-1}$. Figure \ref{fig:b_beta_inv}
  shows how $b_{\beta}^{-1}$ varies with $\beta$.

  For any $k \rightarrow \infty$ as $m \rightarrow \infty$,
  $\gamma_m^{(k)}\{B_t(\beta_0)\} = o(m^2)$, so
  the reduced dependence approximation
  to the
  likelihood with $k_m = c_{\beta_0} \log{m}$ will provide
  asymptotically valid inference for any $c_{\beta_0} > b_{\beta_0}^{-1}$.
  The cost of computing this approximation is
  $O(m \log{m} \, m^{c_{\beta_0} \log{2}}) < O(m^{c_{\beta_0} \log{2}+2}),$
  polynomial in the size of the model,
  but increasing rapidly as $\beta_0$ approaches the critical value. 

 \begin{figure}
    \begin{subfigure}[b]{0.5 \textwidth}
\begin{tikzpicture}[x=1pt,y=1pt]
\definecolor{fillColor}{RGB}{255,255,255}
\path[use as bounding box,fill=fillColor,fill opacity=0.00] (0,0) rectangle (187.90,158.99);
\begin{scope}
\path[clip] (  0.00,  0.00) rectangle (187.90,158.99);
\definecolor{drawColor}{RGB}{0,0,0}

\path[draw=drawColor,line width= 0.4pt,line join=round,line cap=round] ( 36.74, 31.20) -- (178.95, 31.20);

\path[draw=drawColor,line width= 0.4pt,line join=round,line cap=round] ( 36.74, 31.20) -- ( 36.74, 27.60);

\path[draw=drawColor,line width= 0.4pt,line join=round,line cap=round] ( 57.05, 31.20) -- ( 57.05, 27.60);

\path[draw=drawColor,line width= 0.4pt,line join=round,line cap=round] ( 77.37, 31.20) -- ( 77.37, 27.60);

\path[draw=drawColor,line width= 0.4pt,line join=round,line cap=round] ( 97.69, 31.20) -- ( 97.69, 27.60);

\path[draw=drawColor,line width= 0.4pt,line join=round,line cap=round] (118.00, 31.20) -- (118.00, 27.60);

\path[draw=drawColor,line width= 0.4pt,line join=round,line cap=round] (138.32, 31.20) -- (138.32, 27.60);

\path[draw=drawColor,line width= 0.4pt,line join=round,line cap=round] (158.63, 31.20) -- (158.63, 27.60);

\path[draw=drawColor,line width= 0.4pt,line join=round,line cap=round] (178.95, 31.20) -- (178.95, 27.60);

\node[text=drawColor,anchor=base,inner sep=0pt, outer sep=0pt, scale=  1.00] at ( 36.74, 15.60) {0.10};

\node[text=drawColor,anchor=base,inner sep=0pt, outer sep=0pt, scale=  1.00] at ( 77.37, 15.60) {0.20};

\node[text=drawColor,anchor=base,inner sep=0pt, outer sep=0pt, scale=  1.00] at (118.00, 15.60) {0.30};

\node[text=drawColor,anchor=base,inner sep=0pt, outer sep=0pt, scale=  1.00] at (158.63, 15.60) {0.40};

\path[draw=drawColor,line width= 0.4pt,line join=round,line cap=round] ( 31.20, 35.71) -- ( 31.20,138.74);

\path[draw=drawColor,line width= 0.4pt,line join=round,line cap=round] ( 31.20, 35.71) -- ( 27.60, 35.71);

\path[draw=drawColor,line width= 0.4pt,line join=round,line cap=round] ( 31.20, 61.47) -- ( 27.60, 61.47);

\path[draw=drawColor,line width= 0.4pt,line join=round,line cap=round] ( 31.20, 87.22) -- ( 27.60, 87.22);

\path[draw=drawColor,line width= 0.4pt,line join=round,line cap=round] ( 31.20,112.98) -- ( 27.60,112.98);

\path[draw=drawColor,line width= 0.4pt,line join=round,line cap=round] ( 31.20,138.74) -- ( 27.60,138.74);

\node[text=drawColor,anchor=base east,inner sep=0pt, outer sep=0pt, scale=  1.00] at ( 25.20, 32.27) {0};

\node[text=drawColor,anchor=base east,inner sep=0pt, outer sep=0pt, scale=  1.00] at ( 25.20, 58.02) {5};

\node[text=drawColor,anchor=base east,inner sep=0pt, outer sep=0pt, scale=  1.00] at ( 25.20, 83.78) {10};

\node[text=drawColor,anchor=base east,inner sep=0pt, outer sep=0pt, scale=  1.00] at ( 25.20,109.54) {15};

\node[text=drawColor,anchor=base east,inner sep=0pt, outer sep=0pt, scale=  1.00] at ( 25.20,135.29) {20};

\path[draw=drawColor,line width= 0.4pt,line join=round,line cap=round] ( 31.20, 31.20) --
	(180.70, 31.20) --
	(180.70,152.99) --
	( 31.20,152.99) --
	( 31.20, 31.20);
\end{scope}
\begin{scope}
\path[clip] (  0.00,  0.00) rectangle (187.90,158.99);
\definecolor{drawColor}{RGB}{0,0,0}

\node[text=drawColor,anchor=base,inner sep=0pt, outer sep=0pt, scale=  1.00] at (105.95,  3.60) {$\beta$};

\node[text=drawColor,rotate= 90.00,anchor=base,inner sep=0pt, outer sep=0pt, scale=  1.00] at ( 10.80, 92.10) {$b_{\beta}^{-1}$};
\end{scope}
\begin{scope}
\path[clip] ( 31.20, 31.20) rectangle (180.70,152.99);
\definecolor{drawColor}{RGB}{0,0,0}

\path[draw=drawColor,line width= 0.4pt,line join=round,line cap=round] ( 36.74, 36.93) --
	( 38.77, 36.97) --
	( 40.80, 37.00) --
	( 42.83, 37.04) --
	( 44.86, 37.08) --
	( 46.90, 37.11) --
	( 48.93, 37.15) --
	( 50.96, 37.19) --
	( 52.99, 37.23) --
	( 55.02, 37.27) --
	( 57.05, 37.32) --
	( 59.08, 37.36) --
	( 61.12, 37.40) --
	( 63.15, 37.45) --
	( 65.18, 37.50) --
	( 67.21, 37.55) --
	( 69.24, 37.60) --
	( 71.27, 37.65) --
	( 73.31, 37.70) --
	( 75.34, 37.76) --
	( 77.37, 37.82) --
	( 79.40, 37.88) --
	( 81.43, 37.94) --
	( 83.46, 38.01) --
	( 85.50, 38.07) --
	( 87.53, 38.14) --
	( 89.56, 38.22) --
	( 91.59, 38.30) --
	( 93.62, 38.38) --
	( 95.65, 38.46) --
	( 97.69, 38.55) --
	( 99.72, 38.64) --
	(101.75, 38.74) --
	(103.78, 38.85) --
	(105.81, 38.96) --
	(107.84, 39.07) --
	(109.87, 39.20) --
	(111.91, 39.33) --
	(113.94, 39.47) --
	(115.97, 39.62) --
	(118.00, 39.78) --
	(120.03, 39.95) --
	(122.06, 40.13) --
	(124.10, 40.33) --
	(126.13, 40.55) --
	(128.16, 40.78) --
	(130.19, 41.03) --
	(132.22, 41.31) --
	(134.25, 41.61) --
	(136.29, 41.95) --
	(138.32, 42.32) --
	(140.35, 42.74) --
	(142.38, 43.21) --
	(144.41, 43.74) --
	(146.44, 44.34) --
	(148.48, 45.04) --
	(150.51, 45.84) --
	(152.54, 46.80) --
	(154.57, 47.94) --
	(156.60, 49.33) --
	(158.63, 51.07) --
	(160.66, 53.29) --
	(162.70, 56.23) --
	(164.73, 60.31) --
	(166.76, 66.38) --
	(168.79, 76.30) --
	(170.82, 95.51) --
	(172.85,148.48);

\path[draw=drawColor,line width= 0.4pt,dash pattern=on 4pt off 4pt ,line join=round,line cap=round] (175.16, 31.20) -- (175.16,152.99);
\end{scope}
\end{tikzpicture}
      \vspace{-0.5cm}
      \caption{\label{fig:b_beta_inv} $b_\beta^{-1}$, for a restricted
        Ising model with
        $\alpha = 0$.}
    \end{subfigure}
    \begin{subfigure}[b]{0.5 \textwidth}
      \input{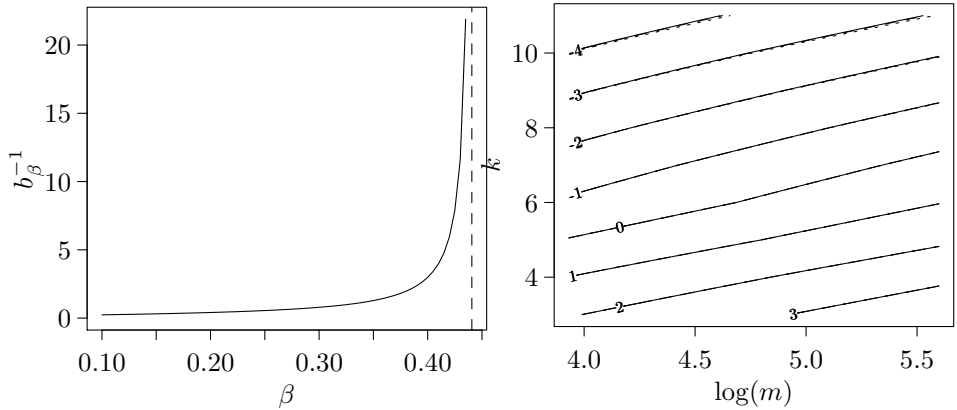}
       \vspace{-0.5cm}
    \caption{\label{fig:delta_rescaled_ising_0pt1_0pt3} Contour plot of $\log \{m^{-1} \delta_m^{(k)}(0.1, 0.3) \}$ against $\log{m}$ and $k$.}
    \end{subfigure}
    \caption{Inference for an Ising model with a reduced
    dependence approximation to the likelihood.}
    \end{figure}

 \subsubsection{Extension to the more general case}
\label{sec:ising_field}
 
  If $\alpha \not = 0$, no simple expression for
  $Z_{r,c}(\alpha, \beta)$ is known.
  Instead, we investigate the behaviour of
  $m^{-1} \delta^{(k)}_{m}(\alpha, \beta)$ numerically, by
  using $\tilde \ell^{(K)}_{m}$ for some large $K$ as a proxy
  for the exact log-likelihood. Care is needed to choose
  $K$ sufficiently large to ensure that the results are not
  sensitive to the choice of $K$.

  Taking $K = 16$, a contour plot of this approximation to
  $\log \{m^{-1} \delta^{(k)}_{m}(0.1, 0.3) \}$
  against $\log{m}$ and $k$
  is shown in Figure \ref{fig:delta_rescaled_ising_0pt1_0pt3},
  for $k = 2, \ldots, 12$, $m = 50, \ldots 300 \null$.
   The same plots with $K=15$ (dashed) and $K=14$ (dot-dash) are
  overlaid, and the differences are barely visible at this scale.
  Given this stability, it seems reasonable to assume a
  contour plot of the exact $\log \{m^{-1} \delta^{(k)}_{m}(0.1, 0.3) \}$
  would look very similar to this.
  To obtain asymptotically valid inference,
  $k_m$ should be chosen
  so that this rescaled error shrinks with $m$, which seems
  to occur if
  $k_m$ grows at rate $c(0.1, 0.3) \log{m}$, for
  $c(0.1, 0.3)$ larger than about $1.5$. This pattern of
  behaviour is very similar to the $\alpha = 0$ case.
  In both cases, reduced dependence approximations with an
  appropriate choice of $k$ will
  give asymptotically valid inference at cost polynomial
  in the size of the model, in contrast with
  the exponential cost of computing the likelihood exactly.

  \section{Discussion}

  The results obtained here can also be applied
  to other approximations to the likelihood
  and to other types of model.
  The conditions on the approximate likelihood,
  such as showing that $\delta_n^{\infty}\{B_t(\theta_0)\} = o_p(r_n)$,
  may be difficult to verify in practice,
  as the exact likelihood is assumed unavailable.
  If the approximation to the likelihood is a truncation of
  a series expansion for the exact likelihood,
  as is the case for the Laplace approximation, the
  conditions can be checked by examining the contribution
  from higher-order terms in the expansion.
  In other cases, 
  it may only be possible to investigate
  the size of the errors numerically, by using
  used a more accurate and expensive approximation
  to the likelihood as a proxy for the exact likelihood,
  as described in Section \ref{sec:ising_field}
  for the Ising model example.

Many approximation methods have a tuning parameter,
$k$ say, where increasing $k$ allows computation of
a more accurate likelihood approximation at increased cost.
The reduced dependence approximation at level $k$ described
in Section \ref{sec:ising} is one example
of this.
In order for the approximate likelihood inference to be
close to the exact likelihood inference,
$k = k_n$ should be allowed
to vary with $n$.
Given an understanding of how the error
in the score function varies with $k$ and $n$,
the results obtained here could be applied to determine
how $k_n$ should scale with $n$.
This has the potential to allow the construction of approximate
likelihoods 
which match the inference with the exact likelihood
closely for all $n$,
but which scale well to large data sizes.

\section*{Acknowledgements}
I am grateful to David Firth, Nancy Reid, Cristiano Varin and two
referees for helpful comments which have greatly improved this paper.
This work was supported by the
Engineering and Physical Sciences Research Council
through Intractable Likelihood: New Challenges from Modern Applications
[grant number EP/K014463/1].

  \appendix
  
\section*{Appendix: proofs of results}
\label{sec:proofs}

\subsection*{Proof of Theorem \ref{thm:consistency}}
  \begin{proof}
  We apply Theorem 5.9 of \cite{VanDerVaart2000}, with
  $\Psi_n(\theta) = r_n^{-1} \tilde u_n(\theta)$
  and $\Psi(\theta) = \bar u(\theta)$.
  Then
  \begin{align*}
    \sup_{\theta \in \Theta} \norm{\Psi_n(\theta) - \Psi(\theta)} &=
    \sup_{\theta \in \Theta} \norm{\bar u_n(\theta) + r_n^{-1} \nabla_\theta \epsilon_n(\theta) - \bar u(\theta)} \\
    &\leq \sup_{\theta \in \Theta} \norm{\bar u_n(\theta) - \bar u(\theta)} +r_n^{-1} \delta_n \\
    &= o_p(1)
  \end{align*}
  The conditions of Theorem 5.9 of \cite{VanDerVaart2000}
  are met, so since $\Psi_n(\tilde \theta_n) = 0$, we have that
  $\tilde \theta_n \rightarrow \theta_0$ in probability,
  as $n \rightarrow \infty$.
  \end{proof}

  \subsection*{Proof of Lemma \ref{lemma:distance_estimators}}
\begin{proof}

 Taking a Taylor expansion of $\bar u_n$ about $\hat \theta_n$, 
 \[\bar u_n(\theta) = \bar u_n (\hat \theta_n) - (\theta - \hat \theta_n)^T \bar J_n(\theta_n^*)  = - (\theta - \hat \theta_n)^T \bar J_n(\theta_n^*),\]
 for some $\theta^*_n$ between $\theta$
 and $\hat \theta_n$.
Define
 $\bar{\tilde{u}}_n(\theta)= r_n^{-1} \tilde u_n(\theta)$. Then
 \[\bar{\tilde{u}}_n(\theta) = \bar u_n(\theta) +
 r_n^{-1} \nabla_\theta \epsilon_n(\theta)
= - (\theta - \hat \theta_n)^T \bar J_n(\theta_n^*) + r_n^{-1} \nabla_\theta \epsilon_n(\theta), \]
so any $\tilde \theta_n$ solving $\bar{\tilde{u}}_n(\tilde \theta_n) = 0$
solves
\[\tilde \theta_n - \hat \theta_n = \bar J_n^{-1}(\theta^*_n) r_n^{-1} \nabla_\theta\epsilon_n(\tilde \theta_n),\]
 for some $\theta^*_n$ between $\tilde \theta_n$
 and $\hat \theta_n$.

 But $\theta^*_n$
 is a consistent estimator of $\theta$, because
 $\tilde \theta_n$ is by Theorem \ref{thm:consistency},
 so
 $\bar J_n(\theta^*_n) \rightarrow I(\theta_0)$
 in probability.
So 
\[\tilde \theta_n - \hat \theta_n = O_p(r_n^{-1} \delta_n(\tilde \theta_n)) .\]

Write $A_n$ for the event
$\{\tilde \theta_n \in B_{t}(\theta_0) \}$.
Then $\text{pr}(A_n) \rightarrow 1$ as $n \rightarrow \infty$
since $\tilde \theta_n$ is consistent, and
conditional on $A_n$, 
\[\tilde \theta_n - \hat \theta_n = O_p\left[r_n^{-1} \delta_n^\infty\{B_t(\theta_0)\}\right] = o_p(r_n^{-1} a_n).\]
For any $\epsilon > 0$,
\begin{align*}
\text{pr}(&\norm{\tilde \theta_n - \hat \theta_n} \geq \epsilon a_n r_n^{-1}) \\
&= \text{pr}(\norm{\tilde \theta_n - \hat \theta_n} \geq \epsilon a_n r_n^{-1} | A_n) \text{pr}(A_n) + \text{pr}(\norm{\tilde \theta_n - \hat \theta_n} \geq \epsilon a_n r_n^{-1} | A_n^C) \text{pr}(A_n^C) \\
& \leq \text{pr}(\norm{\tilde \theta_n - \hat \theta_n} \geq \epsilon a_n r_n^{-1} | A_n) + \text{pr}(A_n^C) \\
&\rightarrow 0
\end{align*}
as $n \rightarrow \infty$.
\end{proof}

\subsection*{Proof of Theorem \ref{thm:normality}}
\begin{proof}
Applying Lemma \ref{lemma:distance_estimators} 
with $a_n = r_n^{1/2}$,
\[\tilde \theta_n - \hat \theta_n = o_p(r_n^{1/2} r_n^{-1}) = o_p(r_n^{-1/2}).\]
So
\[r_n^{1/2} (\tilde \theta_n - \theta_0) = r_n^{1/2} (\hat \theta_n - \theta_0) + o_p(1) \rightarrow N(0, I(\theta_0)^{-1})\]
in distribution, as required.
\end{proof}

\subsection*{Proof of Theorem \ref{thm:hypothesis}}
\begin{proof}

We have
\begin{align*}
  (\tilde \Lambda_n -  \Lambda_n)/2 &=
 \{\tilde \ell_n(\tilde \theta_n) - \tilde \ell_n(\tilde \theta_n^R)\} -
  \{\ell_n(\hat \theta_n) - \ell_n(\hat \theta_n^R)\}  \\
 &=  \{\tilde \ell_n(\tilde \theta_n) - \tilde \ell_n(\hat \theta_n)\}
  + \{\tilde \ell_n(\hat \theta_n) - \ell_n(\hat \theta_n)\}
 + \{\tilde \ell_n(\hat \theta_n^R) - \tilde \ell_n(\tilde \theta_n^R)\}
 + \{\ell_n(\hat \theta_n^R) - \tilde \ell_n(\hat \theta_n^R)\}.
\end{align*}
Taking a Taylor expansion of the first term,
  \begin{align*}
  \tilde \ell_n(\hat \theta_n) - \tilde \ell_n(\tilde \theta_n) &=
  (\hat \theta_n- \tilde \theta_n)^T \tilde u_n(\tilde \theta_n)
  + (\hat \theta_n - \tilde \theta_n)^T \tilde J_n(\bar \theta_n) (\hat \theta_n - \tilde \theta_n) \\
  &= (\hat \theta_n - \tilde \theta_n)^T \tilde J_n(\bar \theta_n) (\hat \theta_n - \tilde \theta_n)
  \end{align*}
for some $\bar \theta_n$ between $\hat \theta_n$ and
$\tilde \theta_n$, so
$\tilde \ell_n(\hat \theta_n) - \tilde \ell_n(\tilde \theta_n)= o_p(1)$,
  since
    $\norm{\hat \theta_n - \tilde \theta_n} = o_p(r_n^{-1/2})$
  and $\tilde J_n(\bar \theta_n) = J_n(\bar \theta_n) + o_p(r_n) = O_p(r_n)$.
  Similarly,
  $\tilde \ell_n(\hat \theta_n^R) - \tilde \ell_n(\tilde \theta_n^R)= o_p(1)$,
so
\begin{align*}
  (\tilde \Lambda_n -  \Lambda_n)/2
   &=  \epsilon_n(\hat \theta_n) - \epsilon_n(\hat \theta_n^R) + o_p(1)\\
  &= (\hat \theta_n - \hat \theta_n^R)^T \nabla_\theta \epsilon_n(\theta_n^*) + o_p(1)
\end{align*}
for some $\theta_n^*$ between $\hat \theta_n$ and $\hat \theta_n^R$.
But $\hat \theta_n - \hat \theta_n^R = O_p(r_n^{-1/2})$, and,
for sufficiently large $n$,
$\norm{\nabla_\theta \epsilon_n(\theta_n^*)} \leq \delta_n^\infty\{B_t(\theta_0)\},$
so
$\tilde \Lambda_n -  \Lambda_n = o_p(1)$,
as required.

\end{proof}

\subsection*{Results needed to prove Theorem \ref{thm:posterior}}

  In order to prove Theorem \ref{thm:posterior}, it is helpful
  to first consider properties of
  inference with the penalized loglikelihood
  $\ell_n^\pi(\theta) = \ell(\theta) + \log \pi(\theta)$
  with log-prior penalty. We write
  $\hat \theta_n^\pi$ for the corresponding penalized likelihood
  estimator, which is the posterior mode.
  Similarly, write $A^\pi$ for the version of the quantity $A$
  computed with $\ell_n^\pi(.)$ in place of $\ell_n(.)$,
  and $\tilde A^\pi$ for the approximate version.
 Since
 $\epsilon_n^\pi(\theta) = \epsilon_n(\theta)$, all
 the error terms remain unchanged.

Under the regularity conditions assumed on the model,
 the penalized likelihood
 estimator $\hat \theta_n^\pi$ will be consistent,
 and
 for any $b_n = o_p(r_n)$, the posterior probability
 that $\theta \in B_{b_n^{-1/2}}(\theta_0)$
 will tend to one as $n \rightarrow \infty$.

To prove Theorem \ref{thm:posterior},
we will use the following lemma, which says that
the error in the penalized log-likelihood ratio
may be approximated
in terms of the error in the score function.
\begin{lemma}
  \label{lemma:LR_diff}
 Suppose that $\delta_n^\infty = o_p(r_n)$,
 and that 
there exists $t > 0$ such that
$\delta_n^\infty\{B_t(\theta_0)\} = o_p(r_n^{1/2})$.
  Then $\hat \theta_n^\pi$ and $\tilde \theta_n^\pi$ are
  consistent estimators of $\theta_0$,
  and $\hat \theta_n^\pi - \tilde \theta_n^\pi = o_p(r_n^{-1/2})$.
  
  If $\gamma_n^\infty\{B_t(\theta_0)\} = o_p(b_n)$, for
  some $b_n = o(r_n)$, then
  \[\sup_{\theta \in B_{b_n^{-1/2}}(\hat \theta_n^\pi)}
  \Big| \big[
        \{\tilde \ell_n^\pi(\tilde \theta_n^\pi) - \tilde \ell_n^\pi(\theta)\}
        -\{\ell_n^\pi(\hat \theta_n^\pi) - \ell_n^\pi(\theta)\}\big]
        - \big[(\hat \theta_n^\pi - \theta)^T \nabla_\theta \epsilon_n(\theta)\big] \Big| = o_p(1).\]
  
\end{lemma}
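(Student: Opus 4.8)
The plan is to handle the two assertions in turn, exploiting the observation that the log-prior leaves the error terms unchanged: since $\epsilon_n^\pi(\theta) = \epsilon_n(\theta)$, we also have $\nabla_\theta \epsilon_n^\pi(\theta) = \nabla_\theta\epsilon_n(\theta)$ and $\nabla_\theta^T\nabla_\theta \epsilon_n^\pi(\theta) = \nabla_\theta^T\nabla_\theta\epsilon_n(\theta)$, so $\delta_n$ and $\gamma_n$ are the same whether formed from $\ell_n$ or from $\ell_n^\pi$. Because $\log\pi(.)$ is three times differentiable, it contributes only an $O_p(1)$ term to the score and information, so the regularity conditions assumed for $\ell_n$ transfer to $\ell_n^\pi$. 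The first assertion then follows by running the arguments of Theorem~\ref{thm:consistency} and Lemma~\ref{lemma:distance_estimators} with $\ell_n^\pi$ in place of $\ell_n$; taking $a_n = r_n^{1/2}$ in Lemma~\ref{lemma:distance_estimators} yields $\hat\theta_n^\pi - \tilde\theta_n^\pi = o_p(r_n^{-1/2})$.

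For the second assertion, I would first substitute $\tilde\ell_n^\pi = \ell_n^\pi + \epsilon_n$ to rewrite the inner bracket as
\[
\{\tilde\ell_n^\pi(\tilde\theta_n^\pi) - \tilde\ell_n^\pi(\theta)\} - \{\ell_n^\pi(\hat\theta_n^\pi) - \ell_n^\pi(\theta)\}
= \{\ell_n^\pi(\tilde\theta_n^\pi) - \ell_n^\pi(\hat\theta_n^\pi)\} + \{\epsilon_n(\tilde\theta_n^\pi) - \epsilon_n(\theta)\}.
\]
The first term is free of $\theta$, and a second-order Taylor expansion of $\ell_n^\pi$ about its maximizer $\hat\theta_n^\pi$ annihilates the score term, leaving $-\tfrac12(\tilde\theta_n^\pi - \hat\theta_n^\pi)^T J_n^\pi(\bar\theta_n)(\tilde\theta_n^\pi - \hat\theta_n^\pi)$ for some intermediate $\bar\theta_n$; this is $o_p(r_n^{-1})O_p(r_n) = o_p(1)$, using $\norm{\tilde\theta_n^\pi - \hat\theta_n^\pi} = o_p(r_n^{-1/2})$ from the first assertion and $J_n^\pi(\bar\theta_n) = O_p(r_n)$.

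The crux is the uniform control of the second term over $\theta \in B_{b_n^{-1/2}}(\hat\theta_n^\pi)$. I would expand $\epsilon_n$ to first order about $\theta$,
\[
\epsilon_n(\tilde\theta_n^\pi) - \epsilon_n(\theta) = (\tilde\theta_n^\pi - \theta)^T \nabla_\theta\epsilon_n(\theta) + \tfrac12 (\tilde\theta_n^\pi - \theta)^T\{\nabla_\theta^T\nabla_\theta\epsilon_n(\xi)\}(\tilde\theta_n^\pi - \theta),
\]
for $\xi$ between $\theta$ and $\tilde\theta_n^\pi$. Since $\hat\theta_n^\pi$ is consistent and $b_n = o(r_n)$ forces $b_n^{-1/2}$ to dominate $r_n^{-1/2}$, the ball lies inside $B_t(\theta_0)$ with probability tending to one, and there $\norm{\tilde\theta_n^\pi - \theta} = O_p(b_n^{-1/2})$. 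The Hessian satisfies $\norm{\nabla_\theta^T\nabla_\theta\epsilon_n(\xi)} = \gamma_n(\xi) \le \gamma_n^\infty\{B_t(\theta_0)\} = o_p(b_n)$, so the quadratic remainder is $O_p(b_n^{-1})o_p(b_n) = o_p(1)$ uniformly; this is precisely where the condition on $\gamma_n^\infty$ enters. Replacing $\tilde\theta_n^\pi$ by $\hat\theta_n^\pi$ in the linear term introduces $(\tilde\theta_n^\pi - \hat\theta_n^\pi)^T\nabla_\theta\epsilon_n(\theta)$, bounded by $o_p(r_n^{-1/2})\,\delta_n^\infty\{B_t(\theta_0)\} = o_p(r_n^{-1/2})o_p(r_n^{1/2}) = o_p(1)$, again uniformly. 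Adding the three $o_p(1)$ contributions gives the stated bound.

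I expect the main obstacle to be making each of these bounds uniform over the shrinking ball rather than pointwise: one must argue that $\norm{\tilde\theta_n^\pi - \theta}$, $\delta_n(\theta)$ and $\gamma_n(\xi)$ can all be replaced by their suprema over $B_t(\theta_0)$ simultaneously, and that the rate matching between $r_n$ and $b_n$ makes the quadratic term in the Hessian expansion negligible uniformly.
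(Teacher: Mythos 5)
Your proof is correct and follows essentially the same route as the paper's: both isolate a $\theta$-free quadratic term that is $o_p(1)$ because $\hat \theta_n^\pi - \tilde \theta_n^\pi = o_p(r_n^{-1/2})$, and Taylor-expand the remaining $\epsilon_n$ difference about $\theta$, controlling the Hessian remainder via $\gamma_n^\infty\{B_t(\theta_0)\} = o_p(b_n)$ over the radius-$b_n^{-1/2}$ ball. The only difference is cosmetic: the paper writes the bracket as $\epsilon_n(\hat \theta_n^\pi) - \epsilon_n(\theta) - \{\tilde \ell_n^\pi(\hat \theta_n^\pi) - \tilde \ell_n^\pi(\tilde \theta_n^\pi)\}$ so the linear term appears directly in $\hat \theta_n^\pi - \theta$, whereas your decomposition yields $\tilde \theta_n^\pi - \theta$ and requires the additional (correctly executed) swap using $\delta_n^\infty\{B_t(\theta_0)\} = o_p(r_n^{1/2})$.
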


\begin{proof}
  That $\hat \theta_n^\pi - \tilde \theta_n^\pi = o_p(r_n^{-1/2})$
  follows by a similar argument to the proof of Theorem \ref{thm:normality}.
Write
  \begin{align*}
C_n(\theta) &= \{\tilde \ell_n^\pi(\tilde \theta_n^\pi) - \tilde \ell_n^\pi(\theta)\}
 - \{\ell_n^\pi(\hat \theta_n^\pi) - \ell_n^\pi(\theta)\}\\
 &= \{\tilde \ell_n^\pi(\hat \theta_n^\pi) - \ell_n^\pi(\hat \theta_n^\pi)\}
 -  \{\tilde \ell_n^\pi(\theta) - \ell_n^\pi(\theta)\}
 - \{\tilde \ell_n^\pi(\hat \theta_n^\pi) - \tilde \ell_n^\pi(\tilde \theta_n^\pi)\}
 \\
 &= \epsilon_n(\hat \theta_n^\pi) - \epsilon_n(\theta)
 - \{\tilde \ell_n^\pi(\hat \theta_n^\pi) - \tilde \ell_n^\pi(\tilde \theta_n^\pi)\}.
 \end{align*}
 Then
 \[\tilde \ell_n(\hat \theta_n^\pi) - \tilde \ell_n(\tilde \theta_n^\pi) 
 = (\hat \theta_n^\pi - \tilde \theta_n^\pi)^T \tilde J_n^\pi(\bar \theta_n) (\hat \theta_n^\pi - \tilde \theta_n^\pi)\]
 for some $\bar \theta_n$ between $\hat \theta_n^\pi$ and
 $\tilde \theta_n^\pi$,
 and
 \[ \epsilon_n(\hat \theta_n^\pi) - \epsilon_n(\theta)
 =  (\hat \theta_n^\pi - \theta)^T \nabla_\theta \epsilon_n(\theta)
  + (\hat \theta_n^\pi - \theta)^T \nabla_\theta^T \nabla_\theta \epsilon_n(\theta_n^*(\theta)) (\hat \theta_n^\pi - \theta) \]
 for some $\theta_n^*(\theta)$ between $\hat \theta_n^\pi$ and $\theta$.
Write
\begin{align*}
  D_n(\theta) &= C_n(\theta) -(\hat \theta_n^\pi - \theta)^T \nabla_\theta \epsilon_n(\theta) \\
  &= (\hat \theta_n^\pi - \theta)^T \nabla_\theta^T \nabla_\theta \epsilon_n(\theta_n^*(\theta)) (\hat \theta_n^\pi - \theta) -  (\hat \theta_n^\pi - \tilde \theta_n^\pi)^T \tilde J_n^\pi(\bar \theta_n) (\hat \theta_n^\pi - \tilde \theta_n^\pi).
 \end{align*}
Then
$\sup_{\theta \in B_{b_n^{-1/2}}(\hat \theta_n^\pi)}  \big|D_n(\theta) \big|$
may be expressed as
 \begin{align*}
   & \sup_{\theta \in B_{b_n^{-1/2}}(\hat \theta_n^\pi)} \Big\{
   \big|(\hat \theta_n^\pi - \theta)^T \nabla_\theta^T \nabla_\theta \epsilon_n(\theta_n^*(\theta)) (\hat \theta_n^\pi - \theta) -  (\hat \theta_n^\pi - \tilde \theta_n^\pi)^T \tilde J_n^\pi(\bar \theta_n) (\hat \theta_n^\pi - \tilde \theta_n^\pi) \big| \Big\} \\
   &\leq \sup_{\theta \in B_{b_n^{-1/2}}(\hat \theta_n^\pi)} \Big\{ \big|(\hat \theta_n^\pi - \theta)^T \nabla_\theta^T \nabla_\theta \epsilon_n(\theta_n^*(\theta)) (\hat \theta_n^\pi - \theta) \big | \Big\} +
   \big| (\hat \theta_n^\pi - \tilde \theta_n^\pi)^T \tilde J_n^\pi(\bar \theta_n) (\hat \theta_n^\pi - \tilde \theta_n^\pi) \big| \\
   &\leq b_n^{-1} \delta_n^\infty\{B_t(\theta_0)\} + o_p(1)
 \end{align*}
 for $n$ sufficiently large. But $\delta_n^\infty\{B_t(\theta_0)\} = o_p(b_n)$,
 so $\sup_{\theta \in B_{b_n^{-1/2}}(\hat \theta_n^\pi)}  \big|D_n(\theta) \big| = o_p(1)$, as required.
\end{proof}

\subsection*{Proof of Theorem \ref{thm:posterior}}
\begin{proof}
  The normalized exact and approximate posterior distributions are
  $\pi(\theta| y) = L_n(\theta) \pi(\theta)/{Z_n}$
  and
  $\tilde \pi(\theta| y) = \tilde L_n(\theta) \pi(\theta)/\tilde Z_n$,
  where $Z_n = \int L_n(\theta) \pi(\theta) d \theta$
  and $\tilde Z_n = \int \tilde L_n(\theta) \pi(\theta) d \theta$.

  First, we find a Laplace approximation
\[\hat Z_n = (2 \pi)^{-p/2} |J_n^\pi(\hat \theta^\pi_n)|^{-1/2} L_n^\pi(\hat \theta^\pi_n)\]
  to $Z_n$. Then $\log Z_n - \log \hat Z_n = o_p(1)$, because $Z_n$ is a $p$-dimensional
  integral, where $p$ remains fixed as $n \rightarrow \infty$. Similarly,
  $\tilde Z_n$
  may be approximated by using a Laplace approximation. Then
  \begin{align*}
    \log Z_n - \log \tilde Z_n &= \log \hat Z_n - \log \hat{\tilde{Z}}_n + o_p(1) \\
    &= (\log|J_n^\pi(\hat \theta^\pi_n)| - \log |\tilde J_n^\pi(\tilde \theta^\pi_n)|)/2 + \ell_n^\pi(\hat \theta^\pi_n) - \tilde \ell_n^\pi(\tilde \theta^\pi_n) + o_p(1).
  \end{align*}
  Since $\gamma_n^\infty\{B_t(\theta_0)\} = o_p(r_n)$,
  both $r_n^{-1} J_n^\pi(\hat \theta^\pi_n)$ and $r_n^{-1} \tilde J_n^\pi(\hat \theta^\pi_n)$ converge towards $I(\theta_0)$, so
\begin{align*}
  \log|J_n^\pi(\hat \theta^\pi_n)| - \log |\tilde J_n^\pi(\tilde \theta^\pi_n)|
  &= \log|r_n^{-1} J_n^\pi(\hat \theta^\pi_n)| - \log |r_n^{-1} \tilde J_n^\pi(\tilde \theta^\pi_n)| \\
  &= \log|I(\theta_0) + o_p(1)| - \log|I(\theta_0) + o_p(1)| \\
  &= o_p(1).
  \end{align*}
So
  \[\log Z_n - \log \tilde Z_n = \ell_n^\pi(\hat \theta^\pi_n) - \tilde \ell_n^\pi(\tilde \theta^\pi_n) + o_p(1).\]
Since
$\delta_n^\infty\{B_t(\theta_0)\} = o_p(r_n^{-1/2})$ and
  $\gamma_n^\infty\{B_t(\theta_0)\} = o_p(r_n)$,
  we may choose $b_n = o(r_n)$ such that
  $\delta_n^\infty\{B_t(\theta_0)\} = o_p(b_n^{-1/2})$ and
  $\gamma_n^\infty\{B_t(\theta_0)\} = o_p(b_n)$.
 Writing $S_n = B_{b_n^{-1/2}}(\hat \theta_n^\pi)$,
  \begin{align*}
 \sup_{\theta \in S_n}&
 \Big\{ \big| \log \tilde \pi(\theta| y) - \log \pi(\theta | y)
 - (\hat \theta_n^\pi - \theta)^T \nabla_\theta\epsilon_n(\theta) \big| \Big\} \\
 & = \sup_{\theta \in S_n}
 \Big\{\big|
 \tilde \ell_n^\pi(\theta) - \log \tilde Z_n- \ell_n^\pi(\theta)
 + \log Z_n
 - (\hat \theta_n^\pi - \theta)^T \nabla_\theta\epsilon_n(\theta)
 \big| \Big\} \\
 & \leq \sup_{\theta \in S_n}
 \Big\{\big|
 \tilde \ell_n^\pi(\theta) - \tilde \ell_n^\pi(\tilde \theta_n^\pi) - \ell_n^\pi(\theta)
 + \hat \ell_n^\pi(\hat \theta_n^\pi)
 - (\hat \theta_n^\pi - \theta)^T \nabla_\theta\epsilon_n(\theta)
 \big| \Big\} + o_p(1)\\
&= o_p(1),
  \end{align*}
  by Lemma \ref{lemma:LR_diff}. Then
\begin{align*}
  2 d_{TV}\{\tilde \pi(\theta|y), \pi(\theta | y)\}
  &= \int_{\Theta} \big|\tilde \pi(\theta|y) - \pi(\theta|y) \big| d\theta \\
  &= \int_{S_n}  \big|\tilde \pi(\theta|y) - \pi(\theta|y) \big| d\theta + \int_{S_n^C}  \big|\tilde \pi(\theta|y) - \pi(\theta|y) \big| d\theta \\
  &\leq \int_{S_n}  \big|\tilde \pi(\theta|y) - \pi(\theta|y) \big| d\theta + \int_{S_n^C}  2 \big|\pi(\theta|y) \big| d\theta + \int_{S_n^C}  2 \big|\tilde \pi(\theta|y) \big| d\theta \\
  &= \int_{S_n}  \left|\frac{\tilde \pi(\theta|y)}{\pi(\theta|y)} - 1 \right| \pi(\theta|y) d\theta + o_p(1) \\
  &\leq \sup_{\theta \in S_n}  \left|\frac{\tilde \pi(\theta|y)}{\pi(\theta|y)} - 1 \right| + o_p(1) \\
  &= \sup_{\theta \in S_n}  \big|\exp\{\log \tilde \pi(\theta|y) - \log \pi(\theta|y) \} - 1 \big| + o_p(1)\\
  &\leq   \Big|\exp \big\{\sup_{\theta \in S_n} |\log \tilde \pi(\theta|y) - \log \pi(\theta|y)|\big\} - 1 \Big| + o_p(1).
  \end{align*}

But
\begin{align*}
  \sup_{\theta \in S_n} |\log \tilde \pi(\theta|y) - \log \pi(\theta|y)|
  &\leq \sup_{\theta \in S_n} |(\hat \theta^\pi_n - \theta)^T \nabla_\theta \epsilon_n(\theta)| + o_p(1) \\
  &= b_n^{-1/2} \delta_n^\infty\{B_t(\theta_0)\} +  o_p(1)\\
  &= o_p(1).
  \end{align*}
So $d_{TV}\{\tilde \pi(\theta|y), \pi(\theta | y)\} = o_p(1)$,
as claimed.
\end{proof}

\bibliography{ogden}
\bibliographystyle{plainnat}

\renewcommand{\thesection}{\arabic{section}}

\setcounter{equation}{0}
\setcounter{figure}{0}
\setcounter{table}{0}
\setcounter{page}{1}
\makeatletter
\renewcommand{\theequation}{S\arabic{equation}}
\renewcommand{\thefigure}{S\arabic{figure}}
\renewcommand{\bibnumfmt}[1]{[S#1]}
\renewcommand{\citenumfont}[1]{S#1}

\title{Supplementary material for \\ On asymptotic validity of naive inference with an approximate likelihood}
\maketitle

\section{Finding $\delta_{(i)}(\theta)$ in Example 3.1}
Recall $Y_i$ is the number of successes out of $m = m_n$
trials on item $i$. We study how
$\delta_{(i)}(\theta) = \norm{(d/{d \theta}) \epsilon_{(i)}(\theta)}$
varies with $m$.

Write
\[f(b; y_i) =  -y_i \log \left\{\text{logit}^{-1}(b)\right\} + (m - y_i)  \log\left\{1 -  \text{logit}^{-1}(b)\right\}\]
and
\[g(b; \theta, y_i) = f(b; y_i) - \log \phi(b; 0, \theta),\]
so that
\[L_{(i)}(\theta) = \int_{-\infty}^{\infty} \exp\{ - g(b; \theta, y_i)\} db. \]
In the following, we drop the data $y_i$ from the notation
for convenience.
Write $\hat b(\theta)$ for the maximizer of $g(.,\theta)$, and
\[\hat g_r(\theta) = \frac{\partial^r g}{\partial b^k} (\hat b(\theta); \theta).\]

By equation (4) of \cite{Shun1995}, the error in the Laplace approximation
to the log-likelihood $\ell_{(i)}(\theta)$ is
\begin{equation}
\epsilon_{(i)}(\theta) = \sum_{l=1}^\infty \frac{1}{2 l!} \sum_{P \in \mathcal{P}_{2l}} n_{2}(P) (-1)^v 
\hat g_{|p_1|}(\theta) \ldots \hat g_{|p_v|}(\theta) \{\hat g_2(\theta)\}^{-l},
\label{eqn:epsilon}
\end{equation}
where $P = p_1 | \ldots | p_v$ is a partition of $2 l$ indices into $v$ blocks
of size $3$ or more, and $n_2(P)$ is the number of partitions $Q$ of $2 l$ indices
into $l$ blocks of size $2$, such that $Q$ is
 complementary to $P$.

Write
$h_P(\theta) = \hat g_{|p_1|}(\theta) \ldots \hat g_{|p_v|}(\theta) \{\hat g_2(\theta)\}^{-l}.$
Then $h_P(\theta) = O_p(m^{v - l})$, since $\hat g_r(\theta) = O_p(m)$ for each $r$.

Differentiating \eqref{eqn:epsilon} gives
\begin{equation}
\frac{d}{d \theta} \epsilon_{(i)}(\theta) = \sum_{l=1}^\infty \frac{1}{2 l!} \sum_{P \in \mathcal{P}_{2l}} n_{2}(P) (-1)^v \frac{d}{d \theta} h_P(\theta),
\label{eqn:size_of_delta}
\end{equation}
and
\begin{equation}
\frac{d}{d \theta} h_P(\theta) = \sum_{i=1}^v \Big[\hat g^\prime_{|p_i|}(\theta) \prod_{j \not = i} \hat g_{|p_j|}(\theta) \{\hat g_2(\theta) \}^{-l} - \prod_{j=1}^v l \hat g_{|p_j|}(\theta)  \hat g_2^\prime(\theta) \{\hat g_2(\theta) \}^{-(l+1)}\Big],
\label{eqn:size_of_h_P_deriv}
\end{equation}
where $\hat g_r^\prime(\theta) = (d / d \theta) \hat g_r(\theta)$.

For each $r$, we have
\begin{align}
\hat g_r^\prime(\theta) &= \frac{d}{d \theta} \left\{g^{(r)}(\hat b(\theta); \theta) \right\} \nonumber \\
&= \frac{\partial g^{(r)}}{\partial \theta} (\hat b(\theta); \theta ) + \frac{d \hat b(\theta)}{d \theta} \hat g_{r+1}(\theta).
\label{eqn:size_of_g_r_deriv}
\end{align}
We now study the size of each of the terms in \eqref{eqn:size_of_g_r_deriv}.
We have
\begin{equation}
  \frac{\partial g^{(r)}}{\partial \theta} (b; \theta) = \frac{\partial}{\partial \theta} \left\{ - \frac{\partial^r}{\partial b^r} \log \phi(b; 0, \theta)\right\} = O_p(1),
  \label{eqn:size_of_g_r_fixed_b_deriv}
\end{equation}
and
\begin{equation}
  \hat g_{r+1}(\theta) = O_p(m).
  \label{eqn:size_of_g_rp1}
\end{equation}
For each $\theta,$
$\hat b(\theta)$ satisfies $g_1(\hat b(\theta); \theta) = 0$.
Differentiating this with respect to $\theta$,
\[\frac{d \hat b(\theta)}{d \theta} g_2(\hat b(\theta); \theta) + \frac{\partial g_1}{\partial \theta}(\hat b(\theta); \theta) = 0.\]
But
\[\frac{\partial g_1}{\partial \theta}(b; \theta) = -2 b \theta^{-3},\]
so 
\begin{equation}
\frac{d \hat b(\theta)}{d \theta} = 2 \hat b(\theta) \theta^{-3} \{\hat g_2(\theta) \}^{-1} = O_p(m^{-1}).
\label{eqn:size_of_b_deriv}
\end{equation}

Substituting \eqref{eqn:size_of_g_r_fixed_b_deriv},
\eqref{eqn:size_of_g_rp1} and \eqref{eqn:size_of_b_deriv} 
 into \eqref{eqn:size_of_g_r_deriv} gives that
$ \hat g_r^\prime(\theta) = O_p(1)$ for each $r$.
From \eqref{eqn:size_of_h_P_deriv} we then
have
\[\frac{d}{d \theta} h_P(\theta) = O_p(m^{v - l  - 1}).\]
The highest order terms in \eqref{eqn:size_of_delta} come from 
partitions with $(l, v) = (2, 1)$ or $(3, 2)$, and so
 $\delta_{(i)}(\theta) = O_p(m^{-2})$.

\section{Finding $\delta_m^{(k)}(\beta)$ in Example 3.2}

\cite{Kaufman1949} provides an exact expression
for the normalizing constant for an Ising model
on an $n \times m$ lattice, with $\alpha = 0$ and periodic boundary, as
\[Z_{n \times m}(0, \beta) = \left\{2 \sinh(2 \beta)\right\}^{nm/2} \bar A_{n,m}(\beta) / 2,\]
where 
\[\bar A_{n,m}(\beta) = A^{(1)}_{n,m}(\beta) + A^{(2)}_{n,m}(\beta)
+ A^{(3)}_{n,m}(\beta) + A^{(4)}_{n,m}(\beta),\]
and
\begin{align*}
A^{(1)}_{n,m}(\beta) &= \prod_{q=0}^{n} 2 \cosh \left(m  \, a_{2q+1, n}(\beta)/2\right), 
&A^{(2)}_{n,m}(\beta) = \prod_{q=0}^{n} 2 \sinh \left(m  \, a_{2q+1, n}(\beta)/2\right), \\
A^{(3)}_{n,m}(\beta) &= \prod_{q=0}^{n} 2 \cosh \left(m  \, a_{2q, n}(\beta)/2\right), 
&A^{(4)}_{n,m}(\beta) = \prod_{q=0}^{n} 2 \sinh \left(m  \, a_{2q, n}(\beta)/2\right)
\end{align*}
where
\[a_{l,n}(\beta) = \cosh^{-1} \left\{\cosh(2 \beta)^2/\sinh(2 \beta) - \cos(\pi l /n) \right\}\]
for $l \geq 1$, and
$a_{0,n}(\beta) = a_0(\beta) = 2 \beta + \log \left\{\tanh(\beta)\right\}$. 
 
Using the approximation $Z^{(k)}_{m \times m}(\beta)$ to
$Z_{m \times m}(\beta)$, the error in the log-likelihood is
\[\epsilon^{(k)}_{m}(\beta) = (m-k+1)\log \bar A_{k,m}(\beta)
-(m-k)\log \bar A_{k-1,m}(\beta) - \log \bar A_{m,m}(\beta).\]

Differentiating this with respect to $\beta$,
\begin{equation}
  \label{eqn:error_in_score}
\frac{d}{d\beta} \epsilon^{(k)}_{m}(\beta) = 
 (m-k+1) \frac{d}{d\beta}\left\{\log \bar A_{k,m}(\beta)\right\}
-(m-k)\frac{d}{d\beta}\left\{\log \bar A_{k-1,m}(\beta)\right\} - \frac{d}{d\beta}\left\{\log \bar A_{m,m}(\beta)\right\},
\end{equation}
and
\[ \frac{d}{d\beta}\left\{\log \bar A_{n,m}(\beta)\right\}
= \sum_{i=1}^4  \frac{d}{d\beta} \left\{\log A_{n,m}^{(i)}(\beta)\right\} r_{n,m}^{(i)}(\beta),\]
where
$r_{n,m}^{(i)}(\beta) = A_{n,m}^{(i)}(\beta) / \bar A_{n,m}(\beta)$.

We have
\begin{align*}
\frac{d}{d\beta} \left\{\log A_{n,m}^{(1)}(\beta)\right\}
&= m/2 \sum_{q=0}^n a^\prime_{2q+1, n}(\beta) \tanh(m a_{2q+1, n}(\beta)/2) \\
&= m/2  \sum_{q=0}^n a^\prime_{2q+1, n}(\beta) + O(m \exp\{-a_0(\beta) m\})
\end{align*}
as $m \rightarrow \infty$, since
$\tanh(x) = 1 + O(\exp\{-2 x\})$ as $x \rightarrow \infty$,
and
$a_{2q + 1, n}(\beta) \geq a_0(\beta) > 0$.

Similar expressions may be obtained
for the other $\frac{d}{d\beta} \left\{\log A_{n,m}^{(i)}(\beta)\right\}$,
and combining these gives
\[ \frac{d}{d\beta}\left\{\log \bar A_{n,m}(\beta)\right\}
= m S^{(o)}_n(\beta) r_{n,m}^{(o)}(\beta) + m S^{(e)}_n(\beta) r_{n,m}^{(e)}(\beta) + O(m \exp\{-a_0(\beta) m\})\]
where
$S^{(o)}_n = \sum_{q=0}^n a^\prime_{2q+1, n}(\beta),$
$S^{(e)}_n = \sum_{q=0}^n a^\prime_{2q, n}(\beta),$
$r_{n,m}^{(o)}(\beta) = r_{n,m}^{(1)}(\beta) + r_{n,m}^{(2)}(\beta)$
and 
$r_{n,m}^{(e)}(\beta) = r_{n,m}^{(3)}(\beta) + r_{n,m}^{(4)}(\beta)$.
Define
\[f(x; \beta) = d_\beta \,
\{-1 + c_\beta - \cos(x)\}^{-1/2} \{1 + c_\beta - \cos(x)\}^{-1/2}\]
where
$d_\beta = 4 \cosh(2 \beta) - 2 \cosh(2 \beta) \coth(2 \beta)^2$
and
$c_\beta = \cosh(2 \beta)^2 / \sinh(2 \beta)\null.$
Then $a^\prime_{j,n}(\beta) = f(j \pi / n; \beta),$
and
$n^{-1} S_n^{(o)}(\beta)$
and $n^{-1} S_n^{(e)}(\beta)$
are both trapezium rule
approximations
to $I(\beta) = \frac{1}{2 \pi}\int_{0}^{2\pi} f(x; \beta) dx$.
Write $R_n^{(o)}(\beta) = n^{-1} S_n^{(o)}(\beta) - I(\beta)$
and
$R_n^{(e)}(\beta) = n^{-1} S_n^{(e)}(\beta) - I(\beta)$
for the error in each of these approximations
to the integral.

\begin{lemma}
  \label{lemma:size_of_remainder}
  For each $\beta < \beta_c$,
  $R_n(\beta) = \max\{|R_n^{(e)}(\beta)|, |R_n^{(o)}(\beta)|\}
  = O(\exp\{-b_\beta n\})$,
  where
  $b_\beta = 2 \cosh^{-1}\{-1 + \cosh(2 \beta)^2 / \sinh(2 \beta)\}.$
\end{lemma}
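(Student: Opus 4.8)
The plan is to read the two quantities $n^{-1}S_n^{(e)}(\beta)$ and $n^{-1}S_n^{(o)}(\beta)$ as the $n$-point periodic trapezoidal (rectangle) rule, together with its node-shifted companion, applied to $I(\beta)=\frac{1}{2\pi}\int_0^{2\pi}f(x;\beta)\,dx$, and then to exploit the classical fact that the trapezoidal rule converges exponentially fast for a periodic integrand that continues analytically to a strip about the real axis, at a rate controlled by the location of the nearest singularity off the real axis. First I would consolidate the integrand into the single-square-root form $f(x;\beta)=d_\beta\{(c_\beta-\cos x)^2-1\}^{-1/2}$, multiplying the two factors $\{-1+c_\beta-\cos x\}^{-1/2}$ and $\{1+c_\beta-\cos x\}^{-1/2}$ together, since the singularities of $f$ are then just the zeros of $(c_\beta-\cos x)^2-1$.

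The crux is to locate the complex singularities of $x\mapsto f(x;\beta)$ nearest the real line; these sit where $\cos x=c_\beta\pm 1$. Here the hypothesis $\beta<\beta_c$ is exactly what is needed: because $c_\beta=\cosh(2\beta)^2/\sinh(2\beta)$ decreases to the value $2$ as $\beta\uparrow\beta_c$, we have $c_\beta>2$, so $c_\beta-1>1$ and neither equation has a real root. This is precisely why $f(\cdot;\beta)$ is smooth on the real axis and the quadrature error decays at all, and it also explains why the argument must fail at $\beta=\beta_c$. Writing $x=iy$ so that $\cos x=\cosh y$, the equation $\cosh y=c_\beta-1$ gives square-root branch points at $x=\pm i\cosh^{-1}(c_\beta-1)$, directly above and below $x=0$, while the pair arising from $\cos x=c_\beta+1$ lies strictly farther out. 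Hence $f(\cdot;\beta)$ is analytic in the open strip $\lvert\operatorname{Im} x\rvert<\cosh^{-1}(c_\beta-1)$, with its nearest obstruction at height $\cosh^{-1}(c_\beta-1)$.

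With the strip identified, I would feed its width into the quantitative error estimate for the periodic trapezoidal rule, either by expanding $f$ in a Fourier series and using the aliasing identity that the quadrature error is the sum of aliased coefficients $\sum_{j\ne 0}c_{jn}$, or by deforming the contour of integration up towards the branch points and bounding the wrap-around contribution. Either route produces exponential decay governed by this singularity, giving the stated bound $O(e^{-b_\beta n})$ with $b_\beta=2\cosh^{-1}\{-1+\cosh(2\beta)^2/\sinh(2\beta)\}$, uniformly for the shifted (odd) grid as well as for the unshifted (even) one; taking the maximum over the two grids then yields $R_n(\beta)=O(e^{-b_\beta n})$.

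I expect the main obstacle to be the quantitative step rather than the qualitative one. Pinning down the exact exponential constant $b_\beta$, and in particular the precise factor relating the singularity height to the decay exponent, requires care because the nearest singularities are square-root branch points rather than poles, and because the nodes of the even rule fall directly beneath them at $x=0$; the contour-shift or coefficient-decay estimate must therefore track the branch-point exponent and the growth of $f$ on lines approaching the strip boundary. Checking that $f$ stays controlled on such lines, and that the contributions from the farther singularities at $\cos x=c_\beta+1$ and from the higher aliases $c_{jn}$ with $\lvert j\rvert\ge 2$ are genuinely of lower order, is where the real work will lie.
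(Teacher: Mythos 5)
Your proposal follows essentially the same route as the paper's proof, which likewise consolidates the integrand, locates the square-root branch points of $f(\cdot;\beta)$ at height $a_\beta = \cosh^{-1}(c_\beta-1)$ above the real axis, notes analyticity in the strip $|\operatorname{Im} z| < a_\beta$ (available precisely because $\beta<\beta_c$ gives $c_\beta>2$), and then concludes by citing Theorem 3.2 of Trefethen and Weideman (2014) --- exactly the periodic-trapezoidal-rule bound you propose to rederive via aliasing of Fourier coefficients or contour shifting. The quantitative step you flag as the main obstacle, namely the precise factor relating the strip half-width to the decay exponent $b_\beta=2a_\beta$, is the one point the paper dispatches by citation rather than derivation, so your sketch is, if anything, more explicit than the published argument about where the real work lies.
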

\begin{proof}
  We apply the results of \cite{Trefethen2014}
  to show exponentially fast convergence of these
  trapezium rule approximations to $I(\beta)$.
  These results depend on properties of 
the integrand $f(z,\beta)$, considered
 as a function of complex-valued $z$.
  There are a branch points of $f(z, \beta)$ at a
  distance  
  $a_\beta = \cosh^{-1}\{-1 + \cosh(2 \beta)^2 / \sinh(2 \beta)\}$
  from the real axis, and the function is analytic for 
  $-a_\beta < \operatorname{Im} z < a_\beta$, so
  by Theorem 3.2 of \cite{Trefethen2014},
  $|R_n^{(o)}(\beta)| = O(\exp\{-2 a_\beta n\}) = O(\exp\{-b_\beta n\}).$
  
  The same argument holds with $R_n^{(e)}(\beta)$ in place
  of $R_n^{(o)}(\beta)$, so $R_n(\beta) =  O(\exp\{-b_\beta n\}),$
    as required.
\end{proof}

We now prove the main result.

\begin{lemma}
  \label{lemma:error_in_score}
  If $k \rightarrow \infty$ as $m \rightarrow \infty$, 
  $\delta^{(k)}_{m}(\beta) = O(m^2 k \exp\{-b_\beta k\}) + o(1).$
\end{lemma}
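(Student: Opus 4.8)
The plan is to start from the expression \eqref{eqn:error_in_score} for $\frac{d}{d\beta}\epsilon^{(k)}_m(\beta)$ and substitute the asymptotic expansion of each $\frac{d}{d\beta}\log\bar A_{n,m}(\beta)$ derived just above it, then exploit an \emph{exact} algebraic cancellation of the leading terms, leaving only the exponentially small trapezium-rule remainders controlled by Lemma \ref{lemma:size_of_remainder}.

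First I would rewrite the expansion of $\frac{d}{d\beta}\log\bar A_{n,m}(\beta)$ so that the $n$-independent integral $I(\beta)$ is separated from the remainders. Using $n^{-1}S_n^{(o)}(\beta) = I(\beta) + R_n^{(o)}(\beta)$ and $n^{-1}S_n^{(e)}(\beta) = I(\beta) + R_n^{(e)}(\beta)$, together with the identity $r_{n,m}^{(o)}(\beta) + r_{n,m}^{(e)}(\beta) = 1$ (immediate from $\bar A_{n,m} = \sum_i A_{n,m}^{(i)}$), this becomes
\[
\frac{d}{d\beta}\log\bar A_{n,m}(\beta) = mn\,I(\beta) + mn\big\{R_n^{(o)}(\beta)\, r_{n,m}^{(o)}(\beta) + R_n^{(e)}(\beta)\, r_{n,m}^{(e)}(\beta)\big\} + O\big(m\exp\{-a_0(\beta)m\}\big).
\]
Since the weights $r_{n,m}^{(o)}, r_{n,m}^{(e)}$ are bounded (both ratios are dominated by the positive term $A_{n,m}^{(1)}$, so $\bar A_{n,m}\asymp A_{n,m}^{(1)}$), Lemma \ref{lemma:size_of_remainder} gives that the middle term is $O(mn\exp\{-b_\beta n\})$.

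Next I would substitute this into \eqref{eqn:error_in_score}, i.e.\ into the combination $(m-k+1)\frac{d}{d\beta}\log\bar A_{k,m} - (m-k)\frac{d}{d\beta}\log\bar A_{k-1,m} - \frac{d}{d\beta}\log\bar A_{m,m}$, and collect the leading $I(\beta)$ contributions. Because $I(\beta)$ is the same for all three values of $n$, these carry coefficient
\[
(m-k+1)mk - (m-k)m(k-1) - m^2 = m\big\{(m-k+1)k - (m-k)(k-1) - m\big\} = 0,
\]
so the entire leading order cancels exactly. This cancellation is the heart of the argument and the reason the reduced-dependence construction works: without it the error would be of size $O(m^2)$ rather than exponentially small in $k$. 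I expect convincing oneself that the cancellation is exact, rather than merely small, to be the main conceptual point, although as a polynomial identity it is short; the accompanying technical care lies in the boundedness of the weights $r_{n,m}^{(o)}, r_{n,m}^{(e)}$, which is what lets the remainders be controlled by $R_n$ uniformly.

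Finally I would bound the surviving remainder terms, noting $1\le k\le m$. The contributions from $n=k$ and $n=k-1$ are $(m-k+1)\,O(mk\exp\{-b_\beta k\})$ and $(m-k)\,O\big(m(k-1)\exp\{-b_\beta(k-1)\}\big)$; using $m-k+1=O(m)$ and $\exp\{-b_\beta(k-1)\}=O(\exp\{-b_\beta k\})$, both are $O(m^2 k\exp\{-b_\beta k\})$. The contribution from $n=m$ is $O(m^2\exp\{-b_\beta m\})=o(1)$ since $b_\beta>0$, and the collected $O(m\exp\{-a_0 m\})$ terms, multiplied by their $O(m)$ prefactors, are $O(m^2\exp\{-a_0 m\})=o(1)$. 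Hence $\frac{d}{d\beta}\epsilon^{(k)}_m(\beta) = O(m^2 k\exp\{-b_\beta k\}) + o(1)$, and since $\beta$ is scalar, $\delta^{(k)}_m(\beta)=\big|\frac{d}{d\beta}\epsilon^{(k)}_m(\beta)\big|$ satisfies the same bound, as claimed.
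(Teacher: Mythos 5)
Your proposal is correct and follows essentially the same route as the paper's proof: substitute the expansion $\frac{d}{d\beta}\log\bar A_{n,m}(\beta)=mnI(\beta)+mn\,t_{n,m}(\beta)+O(m\exp\{-a_0(\beta)m\})$ into \eqref{eqn:error_in_score}, observe the exact polynomial cancellation $(m-k+1)k-(m-k)(k-1)-m=0$ of the leading $I(\beta)$ contributions, and bound the surviving terms by $R_n(\beta)$ via Lemma \ref{lemma:size_of_remainder}. The only cosmetic difference is how you justify boundedness of the weights (via $\bar A_{n,m}\asymp A^{(1)}_{n,m}$ rather than $r^{(o)}_{n,m}+r^{(e)}_{n,m}=1$ with nonnegativity), which changes nothing of substance.
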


\begin{proof}
We have
\[\frac{d}{d\beta}\left\{\log \bar A_{n,m}(\beta)\right\}
= m n I(\beta) + m n t_{n, m}(\beta)  + O(m \exp\{-a_0(\beta) m\})\]
where
$t_{n,m}(\beta) = R^{(o)}_n(\beta) r_{n,m}^{(o)}(\beta) +  R^{(e)}_n(\beta) r_{n,m}^{(e)}(\beta)$. 

Substituting this into \eqref{eqn:error_in_score},
the contributions from the $m n I(\beta)$ terms cancel, and
the combined remainder terms are always $o(1)$, since
$m^2 \exp\{-a_0(\beta) m\} = o(1)$. We are left with
\[\frac{d}{d\beta} \epsilon^{(k)}_{m}(\beta) = 
 (m-k+1) m t_{k,m}(\beta)
-(m-k) m t_{k-1, m}(\beta) - m t_{m,m}(\beta) + o(1). \]
Then
\begin{align*}
  \delta^{(k)}_{m}(\beta) &= \Big|\frac{d}{d\beta} \epsilon^{(k)}_{m}(\beta) \Big| \\
  &\leq (m-k+1) m k |t_{k, m}(\beta)| + (m - k)  m (k-1) |t_{k-1, m}(\beta)| + m^2 |t_{m, m}(\beta)| + o(1) \\
  &\leq (m - k + 1) m k R_k(\beta) + (m - k) m (k-1) R_{k-1}(\beta) + m^2 R_m(\beta) + o(1)
\intertext{since $|t_{n, m}(\beta)| \leq  |R^{(o)}_n(\beta)| r_{n,m}^{(o)}(\beta) + |R^{(e)}_n(\beta)| r_{n,m}^{(e)}(\beta) \leq R_n(\beta)$}
  &= O(m^2 k \exp\{-b_\beta k\}) + o(1)
\end{align*}
by Lemma \ref{lemma:size_of_remainder}, as required.
\end{proof}

\bibliography{../ogden}
\bibliographystyle{plainnat}

\end{document}